\documentclass[a4paper, 11pt]{amsart}
\usepackage{fourier}	
\usepackage{microtype}

\usepackage{verbatim}
\usepackage[english]{babel}
\usepackage[utf8]{inputenc} 		
\usepackage{amssymb}
\usepackage{quiver}		
\usepackage{tikz-cd}				
\usepackage[all]{xy}		
\usepackage{xcolor}
\usepackage[colorlinks=true]{hyperref}  
\usepackage{anysize}						
\marginsize{2.5cm}{2.5cm}{2cm}{2cm}
\usepackage[nameinlink,capitalise,noabbrev]{cleveref}

\newtheorem{theorem}{Theorem}[section]

\newtheorem{lemma}[theorem]{Lemma}
\newtheorem{proposition}[theorem]{Proposition}
\newtheorem{corollary}[theorem]{Corollary}

\newtheoremstyle{TheoremNum}
{}{}              %
{\itshape}                      %
{}                              %
{\bfseries}                     %
{.}                             %
{ }                             %
{\thmname{#1}\thmnote{ \bfseries #3}}%
\theoremstyle{TheoremNum}
\newtheorem{thmn}{Theorem}

\theoremstyle{definition} %

\newtheorem{definition}[theorem]{Definition}
\newtheorem{remark}[theorem]{Remark}

\DeclareMathOperator{\Fun}{Fun}

\DeclareMathOperator{\Hom}{Hom}

\DeclareMathOperator{\id}{id}

\DeclareMathOperator{\Map}{Map}

\newcommand{\APL}{A_{\mathrm{PL}}}
\newcommand{\Ho}{\mathrm{Ho}}

\newcommand{\N}{{\mathbb{N}}}

\newcommand{\Z}{{\mathbb{Z}}}
\newcommand{\Q}{{\mathbb{Q}}}

\newcommand{\C}{{\mathcal{C}}}

\newcommand{\D}{{\mathcal{D}}}

\renewcommand{\L}{\mathcal{L}}
\newcommand{\LL}{\mathbb{L}}
\newcommand{\ft}{\textup{ft}}
\newcommand{\op}{\textup{op}}

\newcommand{\sSet}{\mathsf{sSet}}

\newcommand{\DGL}{\mathsf{DGL}}

\newcommand{\CDGA}{\mathsf{CDGA}}
\newcommand{\CDGC}{\mathsf{CDGC}}

\newcommand{\Ch}{\mathrm{Ch}}
\newcommand{\Chz}{\mathrm{Ch}^{\geq 0}}

\newcommand{\Chaz}{\mathrm{Ch}_{\geq 0}}

\newcommand{\V}{{\mathcal{V}}}
\newcommand{\A}{\mathcal{A}}
\newcommand{\B}{\mathcal{B}}

\newcommand{\I}{\mathcal{I}}

\newcommand{\RL}{\mathbb{R}L}
\newcommand{\LR}{\mathbb{L}R}

\newcommand{\fib}{\mathrm{fib}}
\newcommand{\cof}{\mathrm{cof}}

\usepackage{stmaryrd} %

\definecolor{dark-red}{rgb}{0.6,.15,.15}
\definecolor{dark-blue}{rgb}{.1,.1,.65}
\definecolor{dark-green}{rgb}{.1,.65,.1}
\hypersetup{pdfauthor={Bruno Stonek},colorlinks=true, citecolor=dark-blue, urlcolor=dark-green, linkcolor=dark-red,breaklinks=true,hypertexnames=false}

\author{Bruno Stonek}
\address{Faculty of Mathematics, Informatics and Mechanics, University of Warsaw, ul. Banacha 2, 02-097 Warszawa, Poland}
\email{bstonek@mimuw.edu.pl}

\begin{document} 

\title{A homotopical enhancement of Neisendorfer's algebraic models}
\date{}

\begin{abstract} We give an $\infty$-categorical enhancement of Neisendorfer's equivalences between the homotopy categories of rational simplicial sets, commutative dg algebras (cdgas), cocommutative dg coalgebras (cdgcs) and dg Lie algebras (dgls), where these objects have  respective hypotheses of connectedness, nilpotency and finite type. We do so by proving that linear duality is a partial Quillen equivalence between weak finite type connected cdgcs and cdgas, and that Quillen's Chevalley--Eilenberg adjunction is a \emph{partial crossed derivable equivalence} between cdgcs and dgls, a concept we introduce in this paper to axiomatize precisely the behavior of this adjunction.
\end{abstract}

\maketitle

\section{Introduction}

By the mid 1970s, there were concurrent complete algebraic models for the rational homotopy theory of (pointed) spaces: for instance, Quillen's \cite{Quillen-rht} differential graded Lie algebras (\emph{dgl}s) and cocommutative differential graded coalgebras (\emph{cdgc}s), and Sullivan's commutative differential graded algebras (\emph{cdga}s) \cite{FriedlanderGriffithsMorgan1972,Sullivan, Bousfield--Gugenheim}. For Quillen's models, spaces had to be simply-connected; for Sullivan's, connected and nilpotent was enough, %
but on the other hand they had to be finite-type, i.e. their rational homology vector spaces had to be finite-dimensional.

Technically, what Quillen established was a zig-zag of Quillen equivalences of model categories. Bousfield and Gugenheim \cite{Bousfield--Gugenheim} put Sullivan's models in a similar perspective: they established a Quillen adjunction between the model categories of augmented cdgas and (pointed) simplicial sets, %
and proved it is a \emph{partial Quillen equivalence} on the subcategories of cohomologically connected cdgas weakly of finite type (i.e. $H^0=\Q$ and the minimal Sullivan model is of finite type) and connected nilpotent finite-type rational simplicial sets. Partial Quillen equivalences were introduced in \cite{Pepe-Bruno} and succinctly summarize the situation.

Neisendorfer obtained in \cite{Neisendorfer} dgl and cdgc models for connected, nilpotent spaces of finite type, passing via Sullivan's cdgas. (Neisendorfer's and Quillen's dgl and cdgc models agree up to quasi-isomorphism on simply-connected finite-type spaces: %
this is Baues--Lemaire's conjecture \cite{BauesLemaire}, proved by \cite{Majewski}; see also \cite{Felix-Fuentes-Murillo-realizations} for a shorter proof.) 
What Neisendorfer proved %
is that the homotopy categories on the following objects are equivalent, %
where in the cdga case the opposite category is understood:

\begin{enumerate}
\item  The connected nilpotent rational pointed simplicial sets of finite type,
    \item The cohomologically connected augmented cdgas $A$ weakly of finite type,
    \item The connected cdgcs $C$ weakly of finite type,
    \item The dgls $L$ such that $H_*(L)$ is nilpotent and of finite type.
    \end{enumerate}

This result is seemingly weaker than Quillen's and Bousfield--Gugenheim's which took the form of a zig-zag of Quillen equivalences or a partial Quillen equivalence, respectively. Indeed,  the homotopy categories of two model categories may be equivalent while the model categories are not Quillen equivalent, see for example \cite{Dugger-Shipley}. Allowing for some more flexibility, we can also word the problem thusly: the underlying $\infty$-categories of model categories with equivalent homotopy categories may not be equivalent. %

Thus, we ask the following question: \textbf{can Neisendorfer’s equivalences be lifted to equivalences of $\infty$-categories?}  We will answer this question in the positive: thus, not only do we have equivalences of homotopy categories, but also weak equivalences of whole mapping \emph{spaces} in these categories. We will now describe how to obtain these enhancements.

\medskip

The link between cdgas and cdgcs is given by linear duality, denoted $(-)^*$. In order to make it precise, we have to make a bridge between Bousfield--Gugenheim's model category $\CDGA_{/\Q}$ of augmented cdgas and Neisendorfer's model category $\CDGC_1$ of cdgcs $C$ which are connected (i.e. $C_0=\Q$). We thus make use of the category $\CDGA^1$ of cdgas $A$ which are connected (i.e. $A^0=\Q$), with the model structure from \cite[Rmk. 6.7]{Pepe-Bruno}. Its inclusion into $\CDGA_{/\Q}$ is a partial Quillen equivalence (\cref{lem-connected-augmented}). The next step is to connect $\CDGA^1$ with $\CDGC_1$:

\begin{thmn}[\ref{thm-dual-partial}] The adjunction $(-)^*:\CDGC_1\rightleftarrows (\CDGA^1)^\op:(-)^\circ$ is a partial Quillen equivalence on the full subcategories of the weak finite type connected cdgcs or cdgas. 
\end{thmn}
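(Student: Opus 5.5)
The plan is to verify the two ingredients in the definition of a partial Quillen equivalence from \cite{Pepe-Bruno}. First, $(-)^*\dashv(-)^\circ$ is a Quillen adjunction. Second, its derived functors restrict to mutually inverse equivalences between the full subcategories of weak finite type objects; concretely, the derived unit and counit are weak equivalences there.

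\emph{Quillen adjunction.} I would first check that $(-)^*\colon \CDGC_1\to(\CDGA^1)^\op$ is left Quillen, working with the model structures of \cite{Neisendorfer} and \cite[Rmk.~6.7]{Pepe-Bruno}.
\begin{itemize}
\item Over the field $\Q$, linear duality satisfies $H(C^*)\cong H(C)^*$, so $(-)^*$ preserves quasi-isomorphisms and in particular trivial cofibrations.
\item A cofibration of connected cdgcs is an injection. Its dual is a degreewise surjection of connected cdgas, which should be a fibration in $\CDGA^1$, i.e.\ a cofibration in the opposite category. If fibrations in $\CDGA^1$ are surjectivity in positive degrees $\geq 2$ only, I would check that the dual of an injection between connected coalgebras meets this condition.
\end{itemize}
Since every object of $\CDGC_1$ is cofibrant, the left derived functor is just $(-)^*$.

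\emph{Preservation of subcategories and the derived counit.} I would use that a connected cdgc $C$ is weak finite type if and only if $C^*$ is; this follows from the definitions via $H(C^*)=H(C)^*$ together with the minimal Sullivan model. For a weak finite type connected cdga $A$, I would take as cofibrant replacement its minimal Sullivan model $\Lambda V\xrightarrow{\sim}A$ with $V$ of finite type and $V^0=V^1=0$ or at least $V^1$ finite. Then $\Lambda V$ is finite-dimensional in each degree. For a degreewise finite cdga, $(\Lambda V)^\circ$ should coincide with the degreewise linear dual coalgebra. Then the counit $\Lambda V\to((\Lambda V)^\circ)^*$ is an isomorphism, so the derived counit is a weak equivalence. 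Along the way, $(\Lambda V)^\circ$ is weak finite type, since its homology is $H(A)^*$.

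\emph{Derived unit.} For $C$ weak finite type, I choose a minimal model $\Lambda V\xrightarrow{\sim} C^*$. The derived unit is the composite $C\to (C^*)^\circ\to(\Lambda V)^\circ$. By the identification above, its target is the degreewise dual of $\Lambda V$. Dualizing the composite recovers $\Lambda V\to C^*$, possibly after composing with a canonical map, and this is a quasi-isomorphism. Since $H(C)$ is of finite type, the composite induces on homology a map $H(C)\to H(\Lambda V)^*\cong H(C)^{**}$ that equals the canonical isomorphism. So it is a quasi-isomorphism.

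\emph{Main obstacle.} The hardest step is the description of the right adjoint $(-)^\circ$ on degreewise finite-dimensional connected cdgas, namely that it agrees with the naive linear dual. This requires care with Sweedler-type finite duals and with the connectivity condition. A secondary point is checking that the counit and unit maps really are the canonical double-duality maps, so that the homology computation applies.
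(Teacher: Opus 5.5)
Your outline follows the paper's: Quillen adjunction, reduction to finite-type minimal models where the Sweedler dual is the linear dual and the counit is the double-dual isomorphism, and detecting the derived unit after dualizing. The genuine gap is Condition (2), and your derived-unit step depends on it.

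You assert that $C^*$ is weak finite type whenever $C$ is, and that this ``follows from the definitions via $H(C^*)=H(C)^*$''. It does not, because weak finite type is not a homological condition. The paper's remark on $H^*(S^1\vee S^1)$ exhibits a finite-type cdga with finite-type cohomology whose minimal model is infinitely generated. The missing ingredient is \cref{prop-dual-ft-minimal}: the linear dual of a finite-type \emph{minimal} cdgc $M$ is a finite-type minimal Sullivan algebra. Hence $M^*\xrightarrow{\sim}C^*$ is the minimal model of $C^*$.

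This is not formal. Left Quillen-ness of $(-)^*$ only says every cdgc goes to a fibrant cdga, which is vacuous. It also genuinely uses the nilpotent-completion clause in the definition of a minimal cdgc. The cdgc $\C(L)$ of \cref{rmk-LC-not-quillen} is cofree of finite type with $d_{PC}=0$. Yet its dual is $(\Lambda(x,y),\,dy=\pm xy)$ with $|x|=|y|=1$, which is not Sullivan and not even cofibrant.

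The paper proves the needed statement by dualizing Neisendorfer's tower $\{M_{n+1}\to M_n\}$ of pullbacks along $D^c(V_n)\to S^c(V_n)$. This gives an exhaustive filtration $\{M_n^*\}$ of $M^*$ by pushouts along $S(V_n^*)\to D(V_n^*)$. Via \cref{lem-PQ}, the condition $P(f_n)=0$ becomes $f_n^*(V_n^*)\subset Z(M_n^*)\cap\overline{M_n^*}^2$. Without this, the sentence ``by the identification above, its target is the degreewise dual of $\Lambda V$'' in your derived-unit step is unjustified, since you do not know that $V$ is of finite type.

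The other half of Condition (2) has the same defect but is easily repaired with tools you already use. ``$(\Lambda V)^\circ$ is weak finite type since its homology is $H(A)^*$'' is not a valid argument. Moreover, you need $A^\circ$ in the subcategory for \emph{every} cofibrant weak finite type $A$, not only for its minimal model.

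Here is the repair. Since $(-)^\circ$ is right Quillen, and $m\colon\Lambda V\xrightarrow{\sim}A$ is a weak equivalence between cofibrant cdgas, $m^\circ\colon A^\circ\to(\Lambda V)^\circ=(\Lambda V)^*$ is a quasi-isomorphism. Its target is a finite-type cdgc, and it is fibrant as the image of an object that is fibrant in $(\CDGA^1)^\op$. So \cref{lem-wft-cdgc} applies. The same naturality argument (the paper's square with $\epsilon_M$ and $(m^\circ)^*$) is what justifies checking the derived counit only at $\Lambda V$.

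Two smaller points. First, the step you single out as the main obstacle is easy: for finite-type $A$, $\bigoplus_{k>n}A^k$ is a dg ideal of finite codimension that vanishes in degree $n$, so $(A^n)^\circ=(A^n)^*$. Second, fibrations in $\CDGA^1$ are all degreewise surjections, so your hedge about fibrations there is unnecessary.
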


Above, the functor $(-)^\circ$ is a generalization to the dg setting of the classical construction of the Sweedler dual of an algebra, constructed in \cref{prop-adj-*-circ}. In particular, this induces a contravariant equivalence of $\infty$-categories between connected weak finite type cdgcs and cdgas; this follows from \cref{prop-infty}, where we prove that partial Quillen equivalences induce equivalences of underlying $\infty$-categories. We note that Roca i Lucio obtained in \cite[Thm. 2.26]{Roca}  an analogous equivalence in an operadic situation which is more general but does not encompass the above, as the commutative dg operad is not cofibrant. The proofs are also different: our proof uses duality of finite-type minimal models. \medskip

Next, we make the link between cdgcs and dgls. Denote by $\DGL$ the category of %
dgls. There is an adjunction $\L:\CDGC_1\rightleftarrows \DGL:\C$ \cite[Sect. B.6]{Quillen-rht}, see also \cite[Sect. 4]{Neisendorfer}, where $\C$ is the dg version of the Chevalley--Eilenberg chain complex for computing Lie algebra homology, and $\L$ outputs the dg Lie algebra of primitive elements in the cobar construction.

Neisendorfer \cite[Prop. 7.2, 7.3]{Neisendorfer} %
proves this adjunction induces an equivalence in homotopy categories, once we restrict to the objects in (3) and (4) above. This equivalence cannot be enhanced to a partial Quillen equivalence $\L\dashv \C$: in fact, $\L\dashv \C$ is not even a Quillen adjunction, as $\L$ does not preserve weak equivalences between cofibrant objects (\cref{rmk-LC-not-quillen}). It does preserve weak equivalences between \emph{fibrant} objects, and $\C$ preserves weak equivalences between \emph{cofibrant} objects. Moreover, $\L$ takes fibrant cdgcs in (3) to cofibrant dgls in (4), and $\C$ takes cofibrant dgls in (4) to fibrant cdgcs in (3). Thus, the derived adjunction $\mathbb{R}\L\dashv \mathbb{L} \C$ on the subcategories (3) and (4) exists, and it satisfies $\mathbb{R}\L(X)=\L(X^\fib)$ and $\mathbb{L} \C(Y)=\C(Y^\cof)$. That is, we have a \emph{right} derived functor of the \emph{left} adjoint and vice versa, and they are mutually inverse equivalences of categories. We encode this in the notion of a \emph{partial crossed derivable equivalence}: see \cref{sect-crossed} for details. 

\begin{thmn}[\ref{prop-LC-partial-mirror}] The adjunction  $\L:\CDGC_1\rightleftarrows \DGL:\C$ is a partial crossed derivable equivalence on the connected cdgcs weakly of finite type and the dgls $L$ such that $H_*(L)$ is nilpotent and of finite type.
\end{thmn}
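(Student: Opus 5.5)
We have to check the axioms of a partial crossed derivable equivalence, as set up in \cref{sect-crossed}, for $\L:\CDGC_1\rightleftarrows \DGL:\C$ on the full subcategories $\A\subseteq\CDGC_1$ of connected cdgcs weakly of finite type and $\B\subseteq\DGL$ of dgls $L$ with $H_*(L)$ nilpotent and of finite type. The verification splits into three parts.

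\textbf{(a) Homotopical behaviour.} We show that $\L$ preserves weak equivalences between fibrant objects and $\C$ preserves weak equivalences between cofibrant objects. For $\C$, I filter $\C(L)=\Lambda(sL)$ by word length. On associated graded pieces a quasi-isomorphism $L\to L'$ induces quasi-isomorphisms $\Lambda^n(sL)\to\Lambda^n(sL')$, since we are in characteristic zero and symmetric powers are coinvariants under a finite group. Convergence comes from $C_0=\Q$ and the degree bounds. For $\L$ on fibrant cdgcs, I use the cobar construction with a filtration by tensor length, and then pass to primitives. Primitives are exact on connected cocommutative Hopf algebras by Milnor--Moore. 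Fibrant objects of $\CDGC_1$ are essentially cofree $\Lambda^c(V)$ up to retract, and I expect the filtration argument to converge there. This is exactly what fails for general cofibrant objects (\cref{rmk-LC-not-quillen}).

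\textbf{(b) Preservation of the subcategories.} We show that $\L$ sends fibrant objects of $\A$ into cofibrant objects of $\B$, and $\C$ sends cofibrant objects of $\B$ into fibrant objects of $\A$. Take $X\in\A$ fibrant. Using the duality of \cref{thm-dual-partial}, $X$ is equivalent to the dual of a finite-type minimal Sullivan algebra $(\Lambda V,d)$. Then $\L(X)$ is quasi-isomorphic to Neisendorfer's free dgl $\mathbb{L}(s^{-1}V^\vee)$, which is cofibrant with nilpotent finite-type homology; this is \cite[Prop.~7.2]{Neisendorfer}. Conversely, a cofibrant $L\in\B$ is a retract of a free dgl, so $\C(L)=\Lambda^c(sL)$ is cofree and hence fibrant. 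Its homology is weakly of finite type because of the nilpotency and finite-type hypotheses on $H_*(L)$, which we see by comparing with its minimal model. Consequently the crossed derived functors $\mathbb{R}\L=\L((-)^\fib)$ and $\mathbb{L}\C=\C((-)^\cof)$ are well defined on $\A$ and $\B$. The adjunction descends to them: for fibrant $X$ and cofibrant $Y$ the hom-sets agree up to homotopy, using cylinder and path objects.

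\textbf{(c) Equivalence.} It remains to show that the derived unit $X\to\C\L(X)$ and the derived counit $\L\C(Y)\to Y$ are weak equivalences for fibrant $X\in\A$ and cofibrant $Y\in\B$. This is \cite[Prop.~7.3]{Neisendorfer}. The standard route is to filter both sides and reduce to the Koszul acyclicity of $\Lambda^c\otimes\mathbb{L}$ in characteristic zero. Alternatively one compares with minimal models, using step (b) and the minimal-model duality.

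I expect the main obstacle to be convergence of the spectral sequences in (a) and (c) in the merely nilpotent, non-simply-connected setting. Here elements of degree $0$ in $L$, and elements of degree $1$ in $X$, break the usual connectivity argument. I would first try using the weak finite type hypothesis to replace objects by finite-type minimal models. On such models the filtrations are complete and exhaustive degreewise, and I would then apply the Eilenberg--Moore comparison theorem.
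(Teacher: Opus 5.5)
Your outline has the same shape as the paper's proof, which checks the conditions of \cref{def-adj-crossed} by citing Neisendorfer and takes the unit and counit from \cite[Prop. 4.1]{Neisendorfer}. But the fibrancy step in (b) is false.

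\textbf{The fibrancy step in (b).} $\C(L)=\Lambda^c(sL)$ is cofree for \emph{every} dgl $L$, and cofreeness does not imply fibrancy in $\CDGC_1$. By \cite[Prop. 5.9]{Neisendorfer}, a connected cdgc $C$ is fibrant if and only if it is cofree \emph{and} $s^{-1}H_*(PC)$ is a nilpotent completion. The cdgc $\C(L)$ of \cref{rmk-LC-not-quillen} is cofree but not fibrant. Taking a free model $\widetilde L\xrightarrow{\sim}L$ of that $L$ gives a cofibrant $\widetilde L$ with $\C(\widetilde L)$ still not fibrant, because $s^{-1}H_*P\C(\widetilde L)\cong H_*(\widetilde L)\cong L$.

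This is exactly where the nilpotency hypothesis must be spent. Since $H_*(L)\cong s^{-1}H_*(P\C(L))$ as Lie algebras \cite[Prop. 4.3a]{Neisendorfer}, nilpotency of $H_*(L)$ makes $s^{-1}H_*P\C(L)$ a nilpotent completion, so $\C(L)$ is fibrant. Only then can you apply \cref{lem-wft-cdgc}(3) with $\C(L)$ as its own fibrant replacement, getting weak finite type from $H_*(P\C(L))\cong sH_*(L)$. You spent nilpotency on the weak-finite-type claim instead, so fibrancy is never proved. Also, ``its homology is weakly of finite type'' confuses a property of the cdgc with one of its homology.

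\textbf{The same misconception in (a), and other points.} Your word-length argument for $\C$ in (a) is fine. For $\L$, however, you expect cofreeness of fibrant objects to make a tensor-length filtration on the cobar construction converge. This fails for two reasons:
\begin{itemize}
\item In \cref{rmk-LC-not-quillen} both $C$ and $D$ are cofree, yet $\L(p)$ is not a quasi-isomorphism.
\item Fibrant $X$ may have $X_1\neq 0$ (duals of finite-type minimal Sullivan algebras with $V^1\neq 0$). Then $s^{-1}X_1$ supplies degree-$0$ generators, so tensor length is unbounded in each degree.
\end{itemize}
What distinguishes fibrant objects is the nilpotent-completion condition. The paper simply cites \cite[Prop. 6.4]{Neisendorfer} for this step.

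In the first half of (b), $\L(X)$ is not quasi-isomorphic to a free dgl on $s^{-1}V^\vee$. For $S^2$, with $V=\Q\{x_2,y_3\}$, no differential on $\LL(s^{-1}V^\vee)$ has homology $s^{-1}V^\vee$. The correct argument is:
\begin{itemize}
\item $\L(X)$ is free on $s^{-1}\overline X$, hence cofibrant on the nose;
\item $H_*\L(X)\cong s^{-1}H_*(PX)$ \cite[Prop. 6.3b]{Neisendorfer};
\item this is of finite type by \cref{lem-wft-cdgc} and a nilpotent completion \cite[Prop. 6.2]{Neisendorfer}, hence nilpotent.
\end{itemize}

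Finally, you omit condition (1), closure under weak equivalences, which is clear. The hom-set argument at the end of (b) is unnecessary, and it is not automatic since $\L\dashv\C$ is not Quillen. \cref{prop-crossed-derived-adj} already produces the derived adjunction from the three conditions.
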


While much of the algebraic input necessary to establish the above theorem is already present in \cite{Neisendorfer}, it is worthwhile to isolate in what way exactly is the adjunction $\L \dashv \C$ ``homotopical'': to further understanding, but also to deduce that it induces an equivalence of underlying $\infty$-categories. This we prove in \cref{prop-infty} for a general partial crossed derivable equivalence.

Combining the above results, we obtain the main theorem, which is the desired enhancement of Neisendorfer's equivalences of homotopy categories.

\begin{thmn}[\ref{thm-main}] There is a zig-zag of adjunctions
\[\begin{tikzcd}
	\DGL & {\CDGC_1} & {(\CDGA^1)^\op} & {\CDGA_{/\Q}^\op} & {\sSet_*}
	\arrow[""{name=0, anchor=center, inner sep=0}, "\C"', shift right=2, from=1-1, to=1-2]
	\arrow[""{name=1, anchor=center, inner sep=0}, "\L"', shift right=2, from=1-2, to=1-1]
	\arrow[""{name=2, anchor=center, inner sep=0}, "{(-)^*}", shift left=2, from=1-2, to=1-3]
	\arrow[""{name=3, anchor=center, inner sep=0}, "{(-)^\circ}", shift left=2, from=1-3, to=1-2]
	\arrow[""{name=4, anchor=center, inner sep=0}, "{i^\op}", shift left=2, from=1-3, to=1-4]
	\arrow[""{name=5, anchor=center, inner sep=0}, "{L^\op}", shift left=2, from=1-4, to=1-3]
	\arrow[""{name=6, anchor=center, inner sep=0}, "{\langle - \rangle}"', shift right=2, from=1-4, to=1-5]
	\arrow[""{name=7, anchor=center, inner sep=0}, "\APL"', shift right=2, from=1-5, to=1-4]
	\arrow["\dashv"{anchor=center, rotate=-90}, draw=none, from=2, to=3]
	\arrow["\dashv"{anchor=center, rotate=-90}, draw=none, from=1, to=0]
	\arrow["\dashv"{anchor=center, rotate=-90}, draw=none, from=4, to=5]
	\arrow["\dashv"{anchor=center, rotate=-90}, draw=none, from=7, to=6]
\end{tikzcd}\]
where $\L\dashv \C$ is a partial crossed derivable equivalence and the other three adjunctions are partial Quillen equivalences. 
In particular, the $\infty$-categories determined by the full subcategories on the objects (1), (2), (3) and (4) in the list above are all equivalent.
\end{thmn}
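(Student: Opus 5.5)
The main theorem is an assembly of the four constituent results. I will treat each adjunction in the zig-zag separately and then combine them at the level of underlying $\infty$-categories via \cref{prop-infty}.

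The adjunctions are handled from right to left.

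For $\APL\dashv\langle-\rangle$ between $\sSet_*$ and $\CDGA_{/\Q}^\op$, I invoke Bousfield--Gugenheim as recalled in the introduction. It is a Quillen adjunction, and it restricts to a partial Quillen equivalence between connected nilpotent finite-type rational pointed simplicial sets and cohomologically connected augmented cdgas weakly of finite type. The one thing to check is that these full subcategories satisfy the closure hypotheses in the definition of partial Quillen equivalence from \cite{Pepe-Bruno}: closure under weak equivalence, and containing the relevant (co)fibrant replacements. I expect this to be exactly the form in which \cite{Pepe-Bruno} states the Bousfield--Gugenheim result, so it can be cited directly.

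For $L^\op\dashv i^\op$ between $\CDGA_{/\Q}^\op$ and $(\CDGA^1)^\op$, I apply \cref{lem-connected-augmented}. The subcategory on the $\CDGA^1$ side consists of cdgas weakly of finite type with $A^0=\Q$. The corresponding subcategory of $\CDGA_{/\Q}$ is (2), and this matches because every cohomologically connected cdga of weak finite type has a minimal Sullivan model, which is connected and of finite type.

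For $(-)^*\dashv(-)^\circ$, I apply \cref{thm-dual-partial}. For $\L\dashv\C$, I apply \cref{prop-LC-partial-mirror}, whose subcategories are exactly (3) and (4).

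Next, by \cref{prop-infty}, each partial Quillen equivalence and each partial crossed derivable equivalence induces an equivalence between the $\infty$-categories obtained by localizing the given full subcategories at the weak equivalences. The contravariance in the cdga steps is harmless, since $\infty$-localization commutes with passing to opposites. Composing the four equivalences gives the equivalence of (1)--(4), with (2) taken in the opposite category.

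The main obstacle is the bookkeeping at the junctions. The subcategory used on a given model category by one adjunction must be the same as the subcategory used by the next adjunction. This matters at $\CDGA_{/\Q}$, where it is (2) for both Bousfield--Gugenheim and \cref{lem-connected-augmented}. It also matters at $\CDGA^1$, where it is the weak finite type connected cdgas for both \cref{lem-connected-augmented} and \cref{thm-dual-partial}. At $\CDGC_1$ the subcategory is (3) in both cases, and the only check is that "weak finite type connected" means the same thing in \cref{thm-dual-partial} and in \cref{prop-LC-partial-mirror}.

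If some subcategory were taken to be larger or smaller on one side of a junction, I would fix this by showing that each subcategory is replete, i.e.\ closed under weak equivalence. The induced $\infty$-category is then unchanged, because restricting to a replete subcategory gives a full subcategory of the localization. The statement about the underlying $\infty$-categories then follows immediately.
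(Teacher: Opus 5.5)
Your proposal is correct and is essentially the paper's proof: the four adjunctions are handled by \cref{prop-LC-partial-mirror}, \cref{thm-dual-partial}, the opposite of \cref{lem-connected-augmented}, and Bousfield--Gugenheim's theorem recast as a partial Quillen equivalence in \cite{Pepe-Bruno}, after which \cref{prop-infty} gives the $\infty$-categorical statement. One small slip: since $L\dashv i$, the opposite adjunction is $i^{\op}\dashv L^{\op}$ (as in the diagram), not $L^{\op}\dashv i^{\op}$, though this does not affect the argument.
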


The organization of the paper is as follows. In Section 2, we establish some properties of minimal models of cdgas and cdgcs and of the corresponding model categories. In Section 3, we explore linear duality between these categories and prove it defines a partial Quillen equivalence. In Section 4, we define crossed derivable adjunctions and establish their fundamental properties. In Section 5, we explain how partial crossed derivable equivalences and partial Quillen equivalences define equivalences of underlying $\infty$-categories. In Section 6, we turn to dgls, establishing the partial crossed derivable equivalence between them and connected cdgcs. In Section 7, we put these  results together to get the main enhancement theorem.

\subsection{Notations and conventions}
All vector spaces are taken to be over $\Q$. All graded vector spaces are non-negatively graded by default. A graded vector space or (co)chain complex is said to be of \emph{finite type} if it is finite-dimensional in each degree.  If $V$ is a non-negatively graded vector space, then $sV$ is its suspension (shift to the right) and if $V$ is positively graded, then $s^{-1}V$ is its desuspension (shift to the left).

Denote by  $\Chz$ (resp. $\Ch^{\geq1}$) the category of non-negatively (resp. positively) graded cochain complexes over $\Q$ and by $\Chaz$ (resp. $\Ch_{\geq 1}$) the category of non-negatively (resp. positively) graded chain complexes over $\Q$. We consider $\Chz$ and $\Chaz$ as endowed with the standard symmetric monoidal structure whose symmetry has a Koszul sign.

We endow $\Chz$ with the model structure whose weak equivalences are the quasi-isomorphisms, the fibrations are the degreewise surjections and the cofibrations are the degreewise injections in positive degrees. We endow $\Chaz$ with the model structure whose weak equivalences are the quasi-isomorphisms, the fibrations are the degreewise surjections in positive degrees and the cofibrations are the degreewise injections. See \cite[Thm 2.6, 2.7]{Pepe-Bruno}.

If $\C$ is a model category, then we consider $\C^\op$ as a model category with the opposite model structure \cite[Rem. 1.1.7]{Hovey-ModelCategories}. When we write an adjunction as $\C\rightleftarrows \D$, the left adjoint is always on top.

\subsection{Acknowledgments}
The author would like to thank José Manuel Moreno Fernández for early discussions on Neisendorfer's paper.

\section{Minimal algebraic and coalgebraic models}

In this section, we fix notation and recall the necessary results about minimal differential graded (co)commutative (co)algebras, proving those for which we could not find a proof in the stated form. We introduce the model structures that we will be using.

\subsection{For cdgas} \label{sect-preliminaries-cdgas}

We first fix some notation: A (unital) commutative algebra object in $\Chz$ is called a \emph{commutative differential graded algebra},
or \emph{cdga} for short. We denote their category by $\CDGA$. 
We denote by $\Q$ the cdga which has $\Q$ concentrated in degree zero. We endow the category $\CDGA$ with Bousfield--Gugenheim's model structure \cite{Bousfield--Gugenheim}. The weak equivalences are the quasi-isomorphisms and the fibrations are the degreewise surjections, so every object is fibrant.

We say a cdga $A$ is  \emph{connected} if the unit $\Q\to A$ induces an isomorphism $\Q\cong A^0$. The full subcategory of $\CDGA$ on the connected cdgas is denoted $\CDGA^1$. We endow it with the model structure from \cite[Rmk. 6.7]{Pepe-Bruno}. The weak equivalences are the quasi-isomorphisms and the fibrations are the degreewise surjections, so every object is fibrant.

We will also consider \emph{augmented} cdgas, i.e. endowed with a cdga map $A\to \Q$. We denote their category by $\CDGA_{/\Q}$ and endow it with the overcategory model structure \cite[Page 5]{Hovey-ModelCategories} whereby a map is a (co)fibration or a weak equivalence if and only if it is so in $\CDGA$. Any connected cdga is uniquely augmented, and its differential $d^0$ is the zero map.

In view of the following lemma, we need not specify which of the three categories above we have in mind when we say a connected cdga is cofibrant:

\begin{lemma}\label{lem-cof-cdga}
A connected cdga is cofibrant in $\CDGA^1$ if and only if it is cofibrant in $\CDGA$, if and only if it is cofibrant in $\CDGA_{/\Q}$.
\begin{proof} By construction of the model structure on $\CDGA_{/\Q}$, an augmented cdga is cofibrant in $\CDGA_{/\Q}$ if and only if it is so in $\CDGA$. We will now prove that a connected cdga $A$ is cofibrant in $\CDGA$ if and only if it is cofibrant in $\CDGA^1$.

By \cite[Rmk. 6.8]{Pepe-Bruno} with $G=\{e\}$, there is a Quillen adjunction $L:\CDGA_{/\Q}\rightleftarrows \CDGA^1:i$ where $i$ is the inclusion. Therefore, if $A$ is cofibrant in $\CDGA_{/\Q}$ (equivalently, in $\CDGA$), then $L(A)=L(iA)\cong A$ is cofibrant in $\CDGA^1$, since $L$ is left Quillen.

Conversely, suppose $A$ is cofibrant in $\CDGA^1$. Let $q:B\to A$ be a cofibrant replacement of $A$ in $\CDGA$. We will produce a connected sub-cdga $B'\subset B$ such that $q|_{B'}:B'\to A$ is still an acyclic fibration in $\CDGA$. This will be enough: indeed, by the lifting axioms in $\CDGA^1$ we can find a lift
\[\begin{tikzcd}
	\Q & {B'} \\
	A & A.
	\arrow[from=1-1, to=1-2]
	\arrow[tail, from=1-1, to=2-1]
	\arrow["{q_{B'}}","\sim"', two heads, from=1-2, to=2-2]
	\arrow["s", dotted, from=2-1, to=1-2]
	\arrow["{\id_A}"', from=2-1, to=2-2]
\end{tikzcd}\]
Composing $s$ with the inclusion into $B$ proves $A$ is a retract of $B$, which is cofibrant in $\CDGA$, so $A$ is also cofibrant in $\CDGA$.

To produce the desired connected sub-cdga $B'\subset B$, first observe that since $q$ is a quasi-isomorphism, we have $H^0(B)\cong H^0(A)\cong \Q$. So $\ker(d:B^0\to B^1)\cong \Q$. Now choose a vector space $V$ so there is a decomposition $B^1=d(B^0)\oplus V$. Define $(B')^0=\Q$, %
$(B')^1=V$, $(B')^n=B^n$ for $n\geq 2$. This defines a sub-cdga $B'\subset B$. %
The quotient cochain complex $B'/B$ is $(B^0/\Q \xrightarrow{\cong} B^1/V \to 0 \to \cdots)$ where the first map is induced by the differential and is an isomorphism by construction. So $B'/B$ is acyclic, i.e. the inclusion $B'\hookrightarrow B$ is a quasi-isomorphism, so $q|_{B'}:B'\to A$ is a quasi-isomorphism. %
It is also a degreewise surjective map: in degree 0 because of connectivity of $A$ and in degrees $n\geq 2$ because $q^n$ is surjective. As for degree 1: note $q(dB^0)=dq(B^0)=0$, %
so $q(V)=q(B)=A^1$ by surjectvitiy of $q^1$.  Thus $(q|_{B'})^1$ is also surjective.
\end{proof}
\end{lemma}

A cdga $A$ is \emph{cohomologically connected} if the unit $\Q\to A$ induces an isomorphism $\Q\cong H^0(A)$; any connected cdga is cohomologically connected. We say $A$ is \emph{of finite type} if it is of finite type as a cochain complex, i.e. $A^n$ is a finite-dimensional rational vector space for all $n\geq 0$. 

For a graded vector space $V$, we denote by $\Lambda V$ the free graded commutative algebra on $V$. It has a decomposition by word length, where $\Lambda^rV\subset \Lambda V$ is the subspace spanned by products of $r$ generators. We denote $\Lambda^{\geq r}V:=\bigoplus_{k\geq r} \Lambda^kV$.

A cdga $A$ is called \emph{Sullivan} %
if it is isomorphic to one of the form $(\Lambda V,d)$, with $V^0=0$ (in particular, $A$ is connected), and $d$ satisfies the \emph{nilpotency condition}: $V$ admits an exhaustive filtration by graded subspaces $\{V(k)\}_{k\geq 0}$ such that $dV(0)=0$ and $dV(k)\subset \Lambda V(k-1)$ for $k\geq 1$ \cite[Sect. 12]{Felix-Halperin-Thomas}. %
It is \emph{minimal} if moreover $d(V)\subset \Lambda^{\geq 2}(V)$ 
(equivalently, $d(\Lambda^nV)\subset \Lambda^{\geq n+1}V$ for all $n\geq 1$, i.e. the differential increases word-length).

Equivalently, one can give a step-by-step ``attaching cells'' definition, as done originally in \cite{DGMS}. A connected cdga $A$ is minimal if and only if it admits an exhaustive filtration by sub-cdgas $\{A_r\}_{r\geq -1}$ with $A_{-1}=\Q$ in which every inclusion $A_{r-1}\to A_r$ is the right leg of a pushout in $\CDGA$ (i.e. a relative tensor product) %
\[\xymatrix{S(V_r) \ar[r]^-{f_r} \ar[d] & A_{r-1} \ar[d] \\ D(V_r) \ar[r] & A_r.}\]
Here $V_r$ is concentrated in degrees $\geq 2$ and  $f_r(V_r)\subset Z(A_{r-1})\cap \overline{A_{r-1}}^2$, %
where $\overline{A_{r-1}}=A_{r-1}^{\geq 1}$; for a graded vector space $V$ we denote $S(V)=(\Lambda V,0)$ and $D(V)=(\Lambda(V\oplus sV),d)$ with $|sv|=|v|-1$, $d(sv)=v$, $dv=0$. \medskip

Minimal cdgas are cofibrant; %
a \emph{minimal model} of a cdga is a %
weak equivalence to it from a minimal cdga; 
if $A$ is a cohomologically connected cdga, then it admits a minimal model, and these are unique up to isomorphism \cite[Props. 12.1, 12.10]{Felix-Halperin-Thomas}. For other elementary properties of minimal models, see \cite[Ch. 12]{Felix-Halperin-Thomas} or \cite[Sect. 7]{Bousfield--Gugenheim}.

Note that a cdga of the form $(\Lambda V,d)$ is finite-type if and only if $V$ is of finite type. If $A$ is cohomologically connected, we say it is \emph{weakly of finite type} if its minimal model $(\Lambda V,d)$ is of finite type.%

For an augmented cdga $(A,d)$, we denote by $\overline{A}$ the kernel of the augmentation $A\to\Q$. We denote by $(QA, d_{QA})\in \Chz$ the \emph{indecomposables} of $A$, i.e. the graded vector space $\overline{A}/\overline{A}^2$ together with the differential $[a]\mapsto [da]$. %
This defines a functor $Q:\CDGA_{/\Q}\to\Ch^{\geq0}$ which is left adjoint to $\Q\oplus Z(-)$, the functor that takes a non-negatively graded cochain complex, endows it with a zero multiplication, and adjoins a  unit. This adjunction is Quillen: indeed, the right adjoint $\Q\oplus Z(-)$ preserves fibrations (degreewise surjections) and weak equivalences. %

\begin{lemma}  \label{lem-QM} Let $A=(\Lambda V,d)$ be a Sullivan cdga. Let $d_0$ be the %
restriction of $d$ to $V$, i.e. the composition $V\xrightarrow{d} \Lambda V \xrightarrow{\pi} V$ where $\pi$ is the projection. Then $(V,d_0)$ is a cochain complex, and  $(QA, d_{QA})\cong(V,d_0)$. In particular, 
$d_{QA}=0$ if and only if $A$ is minimal.

\begin{proof} We have $\overline A=\Lambda^{\geq 1}V$ and $\overline{A}^2=\Lambda^{\geq 2} V$. %
Thus, we have a decomposition of graded vector spaces $\overline A=V\oplus \overline{A}^2$, so the projection $V\to QA$, $v\mapsto [v]$ is an isomorphism of graded vector spaces. It commutes with the differential: writing $d(v)=d_0(v)+r(v)$ with $r(v)\in \overline{A}^2$ shows $[d_0(v)]=d[v]$. 
Finally, $d_0=0$ if and only if $d(V)\subset \overline{A}^2=\Lambda^{\geq 2}V$ which means $A$ is minimal.
\end{proof}
\end{lemma}

\begin{lemma} \label{lem-Sullivan-cof+d_QA=0}
A cohomologically connected augmented cdga $A$ is %
 minimal if and only if it is cofibrant and the differential of $QA$ is zero.
\begin{proof}The forward direction follows from cofibrancy of minimal cdgas and from the previous lemma. The converse is \cite[Prop. 5.8]{Neisendorfer}.
\end{proof}
\end{lemma}

\begin{lemma} \label{lem-wft} Let $A\in \CDGA_{/\Q}$ be cohomologically connected. Then the following are equivalent:
\begin{enumerate}
    \item $A$ is weakly of finite type, i.e. its minimal model is of finite type,
    \item $A$ is weakly equivalent to a finite-type cofibrant augmented cdga,
\item $H^*(Q\widetilde A)$ is of finite type, where $\widetilde A\in \CDGA_{/\Q}$ is a cofibrant replacement of $A$.
\end{enumerate}
The analogous connected statement is also true, i.e. we can take $A\in \CDGA^1$ instead of a cohomologically connected $A\in \CDGA_{/\Q}$ and replace ``augmented'' by ``connected'' in (2) and (3). %
\begin{proof}  
If $A$ is weakly of finite type, then in particular it is weakly equivalent to a finite-type cofibrant cdga, so (1) implies (2). We prove (2) implies (1). Suppose the cohomologically connected augmented cdga $B$ is finite-type and cofibrant. We will first prove that the minimal model for $B$ is of finite type.

Let $M=(\Lambda V,d)$ be the minimal model for $B$, so we have a weak equivalence $M\xrightarrow{\sim} B$: since both $M$ and $B$ are cofibrant, $Q$ takes this weak equivalence to a weak equivalence, which takes the form $(V,0)\xrightarrow{\sim} Q(B)$ by \cref{lem-QM}. Therefore, for any $n\geq 1$ we have $V^n\cong H^n(V,0)\cong H^n(Q(B))$ which is finite-dimensional because $B^n$ is finite-dimensional. Thus, the minimal model for $B$ is of finite type.%

Now, if $A$ is weakly equivalent to a finite-type cofibrant augmented cdga $B$, then the minimal models of $A$ and $B$ are isomorphic, so the minimal model of $A$ is finite-type by the above, i.e. $A$ is weakly of finite type. \medskip

We now prove the equivalence of (1) and (3): this result is mentioned at the beginning of p. 52 in \cite{Bousfield--Gugenheim}, but we provide a full proof for completion. Assume (1) holds, so let $\widetilde{A}\to A$ be a cofibrant replacement of $A$. Let $M=(\Lambda V,d)\to \widetilde A$ be the minimal model of $\widetilde A$, so the composition $M\to A$ also gives the minimal model of $A$. Thus, by assumption, $V$ is of finite type. Since $Q$ %
preserves weak equivalences between cofibrant objects, we have an isomorphism $H^*(QM)\xrightarrow{\cong}H^*(Q\widetilde A)$. But by \cref{lem-QM}, $QM\cong (V,0)$, so $H^*(Q\widetilde A)\cong V$ and thus is of finite type.

Now assume (3) holds: Let $M=(\Lambda V, d)$ be the minimal model of $A$ and let $\widetilde A\to A$ be a cofibrant replacement, so $H^*(Q\widetilde A)$ is of finite-type. By the lifting axioms we obtain a comparison map %
$M\to \widetilde A$ %
which is a weak equivalence by 2-out-of-3; by the same argument as in the previous paragraph, $H^*(QM)\cong H^*(Q\widetilde A)$; also $QM\cong (V,0)$, so $V$ is finite-type, as we wanted. \medskip

As for the connected case: the analogous proof works \emph{mutatis mutandis}. In this case, the indecomposables functor $Q$ takes values in $\Ch^{\geq 1}$. %
\end{proof}
\end{lemma}

\begin{remark} A cohomologically connected augmented cdga may be weakly equivalent to a finite-type cdga (so in particular, it may have finite-type cohomology) but not be weakly of finite type. For example, consider $A$ to be the cdga $\Q\oplus \Q x\oplus \Q y$ with $|x|=|y|=1$ and zero differential: %
this is a truncation of the exterior algebra on two elements in degree 1. %
It is the cohomology algebra of the pointed space $S^1\vee S^1$, endowed with the zero differential. Since $S^1\vee S^1$ is formal (as the wedge of two formal spaces, see \cite[Pages 155-156]{Felix-Halperin-Thomas}), %
$A$ is weakly equivalent to the minimal model  $M$ of $S^1\vee S^1$. But $M$ has infinitely many elements in degree 1 %
\cite[Sect. 8.3]{Felix-Halperin-Thomas2}
so it is not finite-type.
\end{remark}

\subsection{For cdgcs}

A (counital) cocommutative coalgebra object in $\Chaz$ is called a \emph{cocommutative differential graded coalgebra}, or \emph{cdgc} for short. We denote their category by $\CDGC$, and denote by $\Q$ the cdgc which has $\Q$ concentrated in degree zero.

A cdgc $C$ is \emph{connected} if the counit $C\to \Q$ induces an isomorphism $C_0\to \Q$. Let $\CDGC_1$ denote the full subcategory of $\CDGC$ on the connected cdgcs. Their objects $C$ have canonical coaugmentations, i.e. there is a unique morphism of coalgebras $\Q\to C$, and  morphisms of cdgcs respect them.

Let $C\in \CDGC_1$. Denote by $\overline{C}$ the cokernel of the coaugmentation $\Q\to C$. The chain complex of \emph{primitives} of $C$, denoted $(PC, d_{PC})\in \Ch_{\geq 1}$, is defined as the kernel of the induced comultiplication map $\overline{C}\to \overline{C}\otimes \overline{C}$. This is the Eckmann--Hilton dual concept to the indecomposables of a connected cdga.

We now define a minimal cdgc dually to a minimal cdga, as described in \cite[below Prop. 5.8b]{Neisendorfer}. For a positively graded vector space $V$, let $\Lambda^c(V)=\bigoplus_{r\geq 0}(V^{\otimes r})^{\Sigma_r}$ be the cofree cocommutative graded coalgebra on $V$, i.e. we make the tensor coalgebra of $V$ cocommutative by considering the fixed points under the actions of the symmetric groups (with Koszul signs). We say a cdgc $C$ is \emph{minimal} if it satisfies the following three conditions: it is isomorphic to a cdgc of the form $(\Lambda^c(V),d)$, it satisfies $d_{PC}=0$, %
and the dg Lie algebra $s^{-1}PC$ is a nilpotent completion. See \cite[Prop. 2.1]{Neisendorfer} for a description of the Lie bracket on the desuspension $s^{-1}PC$, %
and \cite[Def. 3.4]{Neisendorfer} for the definition of nilpotent completion. The dual of \cref{lem-QM} here says $d_{PC}=0$ if and only if the differential decreases word length, %
i.e. $d((V^{\otimes r})^{\Sigma_r})\subset \bigoplus_{1\leq j<n} (V^{\otimes j})^{\Sigma_j}$ for $n\geq 1$.%

We can also give an ``attaching cells'' definition, reformulating \cite[Def. 3.9]{Neisendorfer}. We will only need this in the finite-type case, so we make this restriction, as it simplifies matters. For a graded vector space $V$ concentrated in degrees $\geq 2$, let $S^c(V)=(\Lambda^cV,0)$ and $D^c(V)=(\Lambda^c(V\oplus sV),d)$ where %
$d(v)=sv$, $d(sv)=0$. %
The projection $V\oplus sV\to V$ induces a morphism $D^c(V)\to S^c(V)$. Then a finite-type connected cdgc $C$ is minimal if and only if there is an inverse tower $\{(C_{n+1}\to C_n)\}_{n\in \N}$ with $C_0=\Q$, $\varprojlim_n C_n\cong C$,\footnote{Inverse limits in $\CDGC_1$ are tricky, see \cite[Page 435]{Neisendorfer}.} such that 
every projection $C_{n+1}\to C_n$ is the left leg of a pullback
\[\xymatrix{C_{n+1}\ar[r] \ar[d] & D^c(V_n) \ar[d] \\ C_n \ar[r]_-{f_n} & S^c(V_n)} \]
where $V_n$ is a graded vector space concentrated in degrees $\geq2$ and $P(f_n)=0$. 

\begin{remark}
If $C$ is not finite-type, then one may need to take an inverse tower indexed by the ordinal $\omega^2$, as the construction in \cite[Def. 3.9]{Neisendorfer} shows after reorganization of the limits involved. The asymmetry with the minimal cdga case where one can always take an $\omega$-indexed filtration comes from the fact that for a generator $v$, the element $dv$ is a polynomial, so it involves finitely many earlier generators, whereas the coalgebraic dual statement is no longer true. %
\end{remark}

Neisendorfer \cite[5.2]{Neisendorfer} endows $\CDGC_1$ with a model structure whose weak equivalences are the quasi-isomorphisms and whose cofibrations are the degreewise injections. Every object is cofibrant. He proves \cite[Props. 3.11, 5.8b]{Neisendorfer} that a connected cdgc $C$ is minimal if and only if it is fibrant in $\CDGC_1$ and $d_{PC}=0$, dually to  \cref{lem-Sullivan-cof+d_QA=0}. 
Any connected cdgc admits a \emph{minimal model}, i.e. a weak equivalence to a minimal cdgc, 
and these are unique up to isomorphism \cite[Prop. 5.6b]{Neisendorfer}.

A cdgc is \emph{of finite type} if it is of finite type as a cochain complex. A connected cdgc is \emph{weakly of finite type} if its minimal model $M$ has finite type.

The dual of \cref{lem-wft} is as follows. Its proof dualizes directly.

\begin{lemma} \label{lem-wft-cdgc} Let $C\in \CDGC_1$. Then the following are equivalent:
\begin{enumerate}
    \item $C$ is weakly of finite type, i.e. its minimal model is of finite type,
    \item $C$ is weakly equivalent to a finite-type fibrant connected cdgc,
\item $H_*(P\widetilde C)$ is of finite type, where $\widetilde C\in \CDGC_1$ is a fibrant replacement of $C$.
\end{enumerate}
\end{lemma}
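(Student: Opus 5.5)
We dualize the proof of \cref{lem-wft} step by step, replacing cofibrant by fibrant, the indecomposables functor $Q$ by the primitives functor $P$, and \cref{lem-QM} together with \cref{lem-Sullivan-cof+d_QA=0} by their cdgc counterparts. For a minimal cdgc $M=(\Lambda^c V,d)$ these give $PM\cong (V,0)$ with $d_{PM}=0$. The one new ingredient we need is that $P:\CDGC_1\to\Ch_{\geq 1}$ preserves weak equivalences between fibrant objects. We obtain this by exhibiting $P$ as a right Quillen functor: its left adjoint sends a positively graded chain complex $W$ to the trivial coalgebra $\Q\oplus W$ (zero reduced comultiplication). That left adjoint preserves injections and quasi-isomorphisms, so it is left Quillen, and Ken Brown's lemma then applies to $P$. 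This is dual to the Quillen adjunction $Q\dashv \Q\oplus Z(-)$.

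For (1)$\Rightarrow$(2): the minimal model $M$ is fibrant by \cite[Props. 3.11, 5.8b]{Neisendorfer}, is weakly equivalent to $C$, and is of finite type by hypothesis.

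For (2)$\Rightarrow$(1): let $D$ be a finite-type fibrant connected cdgc with minimal model $D\xrightarrow{\sim}M=(\Lambda^cV,d)$.
\begin{itemize}
\item Both $D$ and $M$ are fibrant, so applying $P$ gives a quasi-isomorphism $PD\xrightarrow{\sim}(V,0)$.
\item Hence $V_n\cong H_n(PD)$, which is finite-dimensional because $PD\subset \overline D$ is of finite type.
\item Weakly equivalent connected cdgcs have isomorphic minimal models by uniqueness \cite[Prop. 5.6b]{Neisendorfer}. So if $C\simeq D$ (through a zig-zag), the minimal model of $C$ is isomorphic to $M$ and is of finite type.
\end{itemize}

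For (1)$\Leftrightarrow$(3): let $C\to\widetilde C$ be a fibrant replacement and $C\to M$ the minimal model. In the model structure on $\CDGC_1$ weak equivalences go toward fibrant objects, so we compare $M$ and $\widetilde C$ through a map rather than through a model mapping into them, as in the cdga case.
\begin{itemize}
\item Every object is cofibrant, so factor $C\to \widetilde C$ if necessary, and use lifting against the fibration $M\to \Q$ along the acyclic cofibration $C\to\widetilde C$ (after replacing by an injective acyclic map) to get a map $\widetilde C\to M$ under $C$.
\item By 2-out-of-3 this map is a weak equivalence between fibrant objects, so $H_*(P\widetilde C)\cong H_*(PM)\cong V$.
\item Therefore $V$ is of finite type exactly when $H_*(P\widetilde C)$ is, for either direction of the equivalence.
\end{itemize}

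The main obstacle is bookkeeping in the dual setting. We must check that $P$ really is right Quillen, with the correct adjoint and with the left adjoint preserving cofibrations, and that the lifting step goes the right way given that fibrant replacements are maps out of $C$. A second point is that ``finite type'' for $D$ bounds $PD$ degreewise, which holds because $PD$ is a subcomplex. Everything else is formal dualization.
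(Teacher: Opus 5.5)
Your proposal is correct and is exactly the direct dualization of the proof of \cref{lem-wft} that the paper invokes. Your check that $P$ is right Quillen, with left adjoint $W\mapsto \Q\oplus W$ (trivial coalgebra) preserving injections and quasi-isomorphisms, is the correct dual of $Q\dashv \Q\oplus Z(-)$, and it supplies the one ingredient the paper leaves implicit.

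One imprecision in (1)$\Leftrightarrow$(3) is cosmetic. If $C\to\widetilde C$ is not already an acyclic cofibration, factoring it as $C\xrightarrow{\sim} C'\twoheadrightarrow\widetilde C$ and lifting gives a map $C'\to M$, not $\widetilde C\to M$. This still suffices, because $C'\to\widetilde C$ is a weak equivalence between fibrant objects. Alternatively, take the minimal model of $\widetilde C$ itself, dual to how the paper treats $\widetilde A$.
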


\subsubsection{Some remarks on the model structure}

We can reinterpret Neisendorfer's model structure on $\CDGC_1$ as follows.  
Let $J:\CDGC_1\to \Ch_{\geq1}$ be the coaugmentation coideal functor which takes a connected cdgc $C$ to the cokernel of its coaugmentation. In other words, $J(C)$ is the truncation of $C$ to positive degrees. %
The right adjoint to the functor $J$ is the cofree functor. %
Now endow $\Ch_{\geq 1}$ with a model structure via the shifting isomorphism $\Ch_{\geq 1}\cong  \Chaz$. Neisendorfer's model structure is then the left-transferred model structure via $J$, i.e. a map in $\CDGC_1$ is a weak equivalence or a cofibration if its truncation to positive degrees is so in $\Ch_{\geq 1}$.

Thanks to Neisendorfer's model structure, restricting to connected cdgcs bypasses the problem pointed out in \cite[Rmk. 1.41]{Roca}, whereby dg coalgebras over an operad may not admit the left-transferred model structure along the forgetful functor. The author observes without proof that the category of unbounded cdgcs furnishes such an example.  We now observe that the category $\CDGC$ of non-negatively graded cdgcs also does not admit the left-transferred model structure along the forgetful functor $\CDGC\to \Chaz$. This is in contrast with $\CDGA$, which does have the right-transferred model structure along the forgetful functor $\CDGA\to \Chz$ \cite{Bousfield--Gugenheim}, i.e. a map in $\CDGA$ is a weak equivalence or a fibration if it is so in $\Chz$.

\begin{proposition}
    The model structure on $\CDGC$ with quasi-isomorphisms as weak equivalences and  degreewise injections as cofibrations does not exist.
    \begin{proof}
        This is essentially \cite[Prop. 8.10]{LeGrignouLejay2022}, but there the cdgcs are unbounded and over an algebraically closed field, so we explain how to use their result to get what we want. In their proof, the authors prove the following statement: let $k$ be an algebraically closed field of characteristic zero. Let $\C$ be the category of unbounded cdgcs over $k$. Then the sum map $k\oplus k\to k$ cannot be factored in $\C$ through a cdgc via a degreewise injective map followed by a quasi-isomorphism.%

        Now, suppose the model structure in $\CDGC$ in the statement existed. Then the sum $\Q\oplus \Q\to \Q$ would factor through a cdgc $C$ via a degreewise injective map followed by a quasi-isomorphism. %
        Take $k$ to be an algebraic closure of $\Q$. The extension of scalars functor $-\otimes k:\CDGC\to \C$ %
        preserves degreewise injectivity since we are over a field. It also preserves quasi-isomorphisms since its underlying functor of chain complexes is an exact functor, so it commutes with homology. Thus, this factorization would yield a factorization in $\C$ as in the previous paragraph, and these do not exist.
    \end{proof}
\end{proposition}

\section{Linear duality of cdgas and cdgcs} \label{sect-duality}

Linear duality of finite-dimensional vector spaces is an equivalence of categories. Linear duality also gives an equivalence between finite-dimensional (commutative) algebras and finite-dimensional (cocommutative) coalgebras. We now wish to extend this to an equivalence of the homotopy theories of finite-type connected cdgas and cdgcs. We will achieve this via a partial Quillen equivalence, a notion from \cite{Pepe-Bruno} whose definition we recall below.

In the first subsection, we prove some properties of dg linear duality. In the second subsection, we introduce the dg Sweedler dual, which extends linear duality to cdgas not necessarily of finite type, and we establish the desired partial Quillen equivalence.

\subsection{Linear duals}

If $V$ is a rational vector space, we let $V^*$ denote its linear dual $\Hom_\Q(V,\Q)$. Now let $(V,d)\in \Chaz$. Its \emph{linear dual} is the cochain complex $(V^*, \delta)\in \Chz$ defined by \[(V^*)^n:=V_n^*=\Hom_\Q(V_n,\Q),\] with differential $\delta$ given by $\delta:V_n^*\to V_{n+1}^*$, $\delta(\alpha)=\alpha \circ d$. We refer to the elements of $V^*$ as \emph{functionals}. %
If $f:V\to W$ is a morphism in $\Chaz$, then $f^*:W^*\to V^*$ defined by $f^*(\alpha)=\alpha \circ f$ is a cochain map, and thus we get a linear dual functor \[(-)^*:\Chaz^\op\to \Chz.\]

We will use the following construction below. For a graded vector space $V$, let $\langle -,-\rangle:V^*\otimes V\to \Q$ denote the pairing given by
    \[
    \langle \alpha, v\rangle = \begin{cases}\alpha(v) & \text{if } \alpha\in V^*_p, v\in V_p \text{ for some } p,\\ 0 & \text{else.}\end{cases}
    \] By abuse of notation, also let $\langle -,-\rangle:(V^*\otimes V^*) \otimes (V\otimes V)\to \Q$ denote the pairing given by the composition
    \[\xymatrix{(V^*\otimes V^*) \otimes (V\otimes V) \ar[rr]^-{\id\otimes \tau \otimes \id} && V^*\otimes V \otimes V^*\otimes V \ar[rr]^-{\langle-,-\rangle \otimes \langle -,-\rangle} && \Q\otimes \Q \ar[r]^-\mu & \Q}\]
    where $\tau$ exchanges the middle factors and $\mu$ is the product. Explicitly, this is the bilinear extension of the following expression on elementary tensors:
    \[\langle \alpha \otimes \beta, v \otimes w\rangle = \begin{cases}\alpha(v)\beta(w) & \text{if } \alpha\in V^*_p, \beta\in V^*_q, v\in V_p, w\in V_q \text{ for some } (p,q),\\ 0 & \text{else.}\end{cases}\]

\begin{proposition} \label{prop-lax}The functor $(-)^*:\Chaz^\op\to \Chz$ is lax symmetric monoidal.
\begin{proof} Let $V,W\in \Chaz$. We define the structure map $\nabla:V^*\otimes W^*\to (V\otimes W)^*$ as follows. For $\alpha\in V_p^*$, $\beta\in V_q^*$, we define the functional $\nabla(\alpha\otimes \beta)\in (V\otimes W)_{p+q}^*$ as follows:
\[\nabla(\alpha\otimes \beta)(v\otimes w)=\langle \alpha\otimes \beta, v\otimes w\rangle.\]%
It is clear that $\nabla$ is a map of graded vector spaces. Let us see it commutes with the differentials, i.e. $\delta\nabla=\nabla d$. On one hand, 
\[\nabla(d(\alpha\otimes \beta))=\nabla(\delta \alpha \otimes \beta + (-1)^p \alpha \otimes \delta \beta)=\nabla(\delta \alpha \otimes \beta) + (-1)^p\nabla(\alpha\otimes \delta\beta).\]
Evaluating on $v\in V_{p+1}, w\in W_q$, we get:
\begin{align*}\nabla(\delta\alpha\otimes \beta)(v\otimes w)+(-1)^p \nabla(\alpha \otimes \delta \beta)(v\otimes w)&=
\delta\alpha(v)\beta(w)+(-1)^{p+1}\alpha(v)\delta\beta(w) \\
&=\alpha(dv)\beta(w)+(-1)^{p+1}\alpha(v)\beta(dw).\end{align*}
We need to see this is equal to $(\delta\nabla(\alpha\otimes \beta))(v\otimes w)$. We evaluate:
\begin{align*}
    (\delta\nabla(\alpha\otimes \beta))(v\otimes w) &=
    (\nabla(\alpha \otimes \beta) \circ d) ( v\otimes w) \\
    &=\nabla(\alpha \otimes \beta) (dv\otimes w + (-1)^{p+1}v\otimes dw) \\
    &= \alpha(dv)\beta(w) + (-1)^{p+1} \alpha(v)\beta(dw).
\end{align*}
For $v\in V_p, w\in W_{q+1}$, the proof is similar, In any other cases, both $\nabla(\alpha\otimes \beta)(v\otimes w)$ and $\delta\nabla(\alpha \otimes \beta)(v\otimes w)$ reduce to zero.

We define the unit structure map to be $\nabla_0:\Q\to \Q^*$, $\nabla_0(a)(b)=ab$. With these definitions, it is clear that the $\nabla$ maps are natural transformations, and that together with $\nabla_0$ they satisfy the associativity and unit coherence axioms. The symmetry axiom takes the form $\nabla=\sigma^*\circ \nabla\circ \sigma$, where $\sigma:V^*\otimes W^*\to W^*\otimes V^*$ is the symmetry $\sigma(\alpha\otimes \beta)=(-1)^{pq}\beta\otimes \alpha$, for $\alpha \in V_p^*$ and $\beta\in W_q^*$. It is quickly verified: indeed $\sigma^*\otimes \nabla \otimes \sigma$ reduces to $\nabla$ since the factor $(-1)^{pq}$ appears two times, so it multiplies to 1.
\end{proof}
\end{proposition}

\begin{corollary} \label{cor:lineardual}
    There is a linear dual functor $(-)^*:\CDGC\to \CDGA^\op$, and it restricts to a functor $(-)^*:\CDGC_1\to (\CDGA^1)^\op$.
    \begin{proof}
        Lax symmetric monoidal functors induce functors between the corresponding categories of commutative monoids. On the other hand, if $\V$ is a symmetric monoidal category, then there is an isomorphism between the category of commutative monoids in $\V^\op$ and the opposite to the category of cocommutative comonoids in $\V$; in particular, the category of commutative monoids in $\Chaz^\op$ is isomorphic to $\CDGC^\op$. Since the category of commutative monoids in $\Chz$ is $\CDGA$, we get a linear dual functor $(-)^*:\CDGC^\op\to \CDGA$ or, equivalently, a linear dual functor as in the statement.%

        Now, let $C\in \CDGC_1$, so $\epsilon_0:C_0\to \Q$ is an isomorphism. Therefore, $\epsilon_0^*:\Q^*\to C_0^*$ is also an isomorphism, so the component at zero of the unit of $C^*$, namely $\Q\stackrel{\nabla_0}{\to}\Q^* \stackrel{\epsilon_0^*}{\to}C_0^*$, is also an isomorphism. Thus, $C^*\in \CDGA^1$.
    \end{proof}
\end{corollary}

\begin{remark}

    We can make the product and the unit in the linear dual of a cdgc $C$ more explicit by inspection. The product map for $C^*$ is $C^*\otimes C^* \stackrel{\nabla}{\to}(C\otimes C)^* \stackrel{\Delta^*}{\to} C^*$, where $\Delta$ is the coproduct of $C$. More explicitly: if $\alpha \in C_p^*, \beta\in C_q^*, v\in C_{p+q}$, and the $(p,q)$-component of the coproduct of $v$ is $\sum_i x_iy_i$ with $x_i\in C_p$, $y_i\in C_q$, then
    \[(\alpha \cdot \beta)(v)=\sum_i \alpha(x_i)\beta(y_i).\]
    For $v\notin C_{p+q}$, the product is zero. We can also describe $\alpha\cdot \beta$ in a more concise way using the pairings introduced above:  \begin{equation}\label{eq-product}\langle \alpha\cdot \beta,v \rangle=\langle \alpha \otimes \beta, \Delta v\rangle\end{equation} for all $v\in C$. %

    The unit map of $C^*$ is the composition $\Q\stackrel{\nabla_0}{\to}\Q^* \stackrel{\epsilon^*}{\to}C^*$, where $\epsilon:C\to \Q$ is the counit of $C$, i.e. the unit element of $C^*$ is the functional $\epsilon$.

\end{remark}

\begin{remark}We have chosen to give a direct proof of \cref{cor:lineardual} since it is very explicit. Alternatively, the result can be proved in a more abstract way as in \cite[Lem. 2.1]{Roca}, which applies to the unbounded situation.
\end{remark}

In the following proposition, we use the subindex ``$\ft$'' to denote full subcategories on finite-type objects. %

\begin{proposition} \label{prop-lineardual-ft}
    The linear dual of a finite-type cdga is a finite-type cdgc. This defines a functor $(-)^*:\CDGA_\ft^\op\to \CDGC_\ft$, and \[(-)^*:\CDGC_\ft\rightleftarrows \CDGA_\ft^\op:(-)^*\] is an adjoint equivalence.
    \begin{proof} We can restrict the linear dual functor $(-)^*:\Chaz^\op\to \Chz$ from \cref{prop-lax} to the subcategory of finite-type chain complexes. It then lands in the subcategory of finite-type cochain complexes, and it becomes an equivalence, with pseudoinverse given by the linear dual functor in the other direction.

    The monoidal structure maps from \cref{prop-lax} become isomorphisms in the subcategory of finite-type objects (the fact that our chain complexes are bounded below becomes crucial here), so the lax symmetric monoidal structure on $(-)^*$ from that proposition becomes strong. There is an analogous strong symmetric monoidal structure on the linear dual functor in the other direction. Moreover, the canonical double-dual evaluation isomorphisms are
monoidal. Thus, the adjoint equivalence $(-)^*:(\Chaz)_\ft^\op\to \Chz_\ft:(-)^*$ of strong symmetric monoidal functors induces an adjoint equivalence $(-)^*:\CDGC_\ft^\op\rightleftarrows:\CDGA_\ft:(-)^*$  by passing to the categories of commutative monoids. Taking opposite categories gives the asserted adjoint equivalence.
    \end{proof}
\end{proposition}

\begin{remark}
    Let $A$ be a finite-type cdga. We can make the coproduct and the counit of $A^*$ more explicit. Let $\alpha\in A^*$. Since $\nabla:A^*\otimes A^*\to (A\otimes A)^*$ is an isomorphism, we can define $\Delta \alpha\in A^*\otimes A^*$ as the unique element such that its image under $\nabla$ is the functional in $(A\otimes A)^*$ determined by the bilinear map $(a,b)\mapsto \alpha(ab)$. Using pairings, this means $\Delta \alpha$ is determined by the expression
    \[\langle \Delta \alpha, a\otimes b\rangle = \langle \alpha, ab\rangle\]
    for all $a,b\in A$.  Notice how similar it is to the expression in (\ref{eq-product}).
\end{remark}

In relating minimal models of cdgas and cdgcs, we will need the following simple lemma \cite[1.2.4]{LodayVallette}: %

\begin{lemma} \label{lem-PQ} For a finite-type connected cdga $A$ and a finite-type connected cdgc $B$, we have natural isomorphisms of (co)chain complexes:
\[P(A^*)\cong (QA)^*, \qquad Q(C^*) \cong (PC)^*. \]
\end{lemma}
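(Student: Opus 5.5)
The plan is to reduce both isomorphisms to a single statement about finite-dimensional linear algebra: over a field, the dual of a kernel is the cokernel of the dual map, and the dual of a cokernel is the kernel of the dual map. We prove the first isomorphism $P(A^*)\cong (QA)^*$ in detail. The second, $Q(C^*)\cong (PC)^*$, follows by the formally dual argument, or by applying the first to $A=C^*$ and using the double-dual isomorphism of \cref{prop-lineardual-ft}.

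First fix the coaugmented and augmented structures. Since $A$ is connected, $\overline A=A^{\geq 1}$. The linear dual $A^*$ is a finite-type connected cdgc by \cref{prop-lineardual-ft}, with $(A^*)_0=(A^0)^*\cong\Q$. Its coaugmentation is dual to the augmentation $A\to\Q$, so $\overline{A^*}=(A^*)_{\geq 1}=(\overline A)^*$ degreewise. By definition,
\[ QA=\operatorname{coker}\bigl(\mu:\overline A\otimes\overline A\to\overline A\bigr), \qquad P(A^*)=\ker\bigl(\overline\Delta:\overline{A^*}\to\overline{A^*}\otimes\overline{A^*}\bigr). \]
Here $\overline\Delta$ is the reduced comultiplication. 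The remark following \cref{prop-lineardual-ft} identifies $\Delta$ on $A^*$ with $\nabla^{-1}\circ\mu^*$. We check that this restricts to $\overline\Delta=\nabla^{-1}\circ\overline\mu^*$ on the reduced parts. This is a bookkeeping step: the terms involving the unit $1\in A^0$ correspond exactly to the terms $\alpha\otimes 1+1\otimes\alpha$ that are removed when passing to $\overline\Delta$.

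Next use finite type. In each fixed total degree $n$, the spaces $\overline A^n$ and $(\overline A\otimes\overline A)^n=\bigoplus_{p+q=n,\,p,q\geq1}\overline A^p\otimes\overline A^q$ are finite-dimensional; this uses connectivity and bounded-belowness. So $\nabla$ is an isomorphism in each degree, and dualizing the exact sequence $\overline A\otimes\overline A\to\overline A\to QA\to0$ gives an exact sequence
\[ 0\to (QA)^*\to\overline A^{\,*}\xrightarrow{\overline\mu^*}(\overline A\otimes\overline A)^*. \]
Composing with $\nabla^{-1}$ identifies $\overline\mu^*$ with $\overline\Delta$. Hence $(QA)^*\cong\ker\overline\Delta=P(A^*)$ as graded vector spaces.

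It remains to check compatibility with differentials and naturality. The differential on $QA$ is induced from $d_A$. The differential on $P(A^*)$ is the restriction of $\delta$ on $A^*$, which is $\alpha\mapsto\alpha\circ d$. These match because the inclusion $(QA)^*\hookrightarrow\overline A^{\,*}$ is the dual of the quotient cochain map $\overline A\to QA$, and duals of cochain maps are chain maps. Naturality in $A$ is immediate, since every map used is natural: $\nabla$, the dual of the quotient, and the kernel inclusion.

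The main obstacle is the sign and unit bookkeeping in identifying $\overline\Delta$ with $\nabla^{-1}\overline\mu^*$, together with making sure the finiteness hypothesis is used exactly where $\nabla$ must be invertible. Connectivity guarantees that each graded piece of $\overline A\otimes\overline A$ is a finite sum. I would handle this by writing everything with the pairings $\langle-,-\rangle$ and the identity $\langle\Delta\alpha,a\otimes b\rangle=\langle\alpha,ab\rangle$.
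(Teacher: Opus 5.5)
Your argument is correct: identifying $\overline{\Delta}$ with $\nabla^{-1}\circ\overline{\mu}^*$, using connectivity and finite type to make $\nabla$ invertible in each degree, and dualizing the right-exact sequence $\overline{A}\otimes\overline{A}\to\overline{A}\to QA\to 0$ is the standard linear-duality argument. The paper gives no proof of its own and only cites \cite[1.2.4]{LodayVallette}, so your write-up supplies the details behind that citation; this includes the double-dual route to $Q(C^*)\cong(PC)^*$, which correctly uses that $QC^*$ is again of finite type.
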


\begin{proposition} \label{prop-dual-ft-minimal} The linear dual of a finite-type minimal connected cdgc (resp. connected cdga) is a finite-type minimal cdga (resp. cdgc).
\begin{proof}
$\bullet$ Let $C$ be a finite-type minimal connected cdgc. Thus, there is an inverse tower $\{(C_{n+1}\to C_n)\}_{n\in \N}$ with $C_0=\Q$, $\varprojlim_n C_n\cong C$, such that 
every successor projection $C_{n+1}\to C_n$ is the left leg of a pullback
\[\xymatrix{C_{n+1}\ar[r] \ar[d] & D^c(V_n) \ar[d] \\ C_n \ar[r]_-{f_n} & S^c(V_n)} \]
where $V_n$ is a graded vector space concentrated in degrees $\geq2$ and $P(f_n)=0$. 

Linear duality takes complete inverse towers with surjective maps to exhaustive filtrations, so we have an exhaustive filtration on $C^*$ by $\{C_n^*\}_{n\in \N}$. We now show it satisfies the necessary conditions for $C^*$ to be minimal. 

First, note $S^c(V_n)^*\cong S(V_n)$ and $D^c(V_n)^*\cong D(V_n^*)$, so linear duality turns the pullback into the pushout
\[\xymatrix{S(V_n^*) \ar[r]^-{f_n^*} \ar[d] & C_n^* \ar[d] \\ D(V_n^*) \ar[r] & C_{n+1}^*.}\]
By \cref{lem-PQ}, $Q(f_n^*)=P(f_n)^*$ which is zero by minimality of $C$. Since $Q(S(V_n^*))=V_n$, this means precisely that $f_n^*(V_n^*)\subset \overline{C_n^*}^2$. These elements are also closed, because $S(V_n^*)$ has zero differential. Hence $f_n^*(V_n^*)\subset Z(C_n^*)\cap \overline{C_n^*}^2$, as desired. \medskip

$\bullet$ Let $A$ be a finite-type minimal connected cdga. The proof above dualizes to prove that $A^*$ is also minimal (and finite-type, of course). %
Alternatively, we can give a model-categorical proof using a result we will prove in a second. Since $A$ is minimal, it is cofibrant %
and the differential on $QA$ is zero. By \cref{prop-dual-quillen} its %
linear dual is fibrant and it is also finite-type. By \cref{lem-PQ}, $P(A^*)\cong (QA)^*$. Thus, the differential on $P(A^*)$ is zero, so $A^*$ is minimal, as we wanted to show.
\end{proof}
\end{proposition}

\subsection{Linear duality as a partial Quillen equivalence}

We now aim to realize the duality of finite-type cdgas and cdgcs as a partial Quillen equivalence in the sense of \cite{Pepe-Bruno}. In order to do so, we first need a right adjoint to the linear duality functor on all cdgas, not necessarily of finite type:

\begin{proposition} \label{prop-adj-*-circ}
    The functor $(-)^*:\CDGC\to \CDGA^\op$ admits a right adjoint $(-)^\circ:\CDGA^\op\to \CDGC$, the \emph{Sweedler dual}. This adjunction restricts to an adjunction \[(-)^*:\CDGC_1\rightleftarrows (\CDGA^1)^\op:(-)^\circ.\]
    The Sweedler dual of a finite-type cdga is its linear dual.
    \begin{proof}
        The first part is a particular case of \cite[Prop. 2.2]{Roca} in the non-negatively graded context: there, an abstract proof of existence of this adjoint is produced. We shall provide instead a generalization to the dg setting of the classical construction of the Sweedler dual of an algebra \cite{Sweedler} to obtain an explicit description of this functor directly: %
        the Sweedler dual of a dga $A$ consists, by definition, of the functionals on $A$ such that their kernels contain a dg ideal of finite codimension. %
        By the universal property of the quotient, these are the functionals on $A$ that factor via the quotient epimorphism $\pi:A\to A/I$ for a dg ideal $I$ of finite codimension; in other words, functionals in the image of the monomorphism $\pi^*:(A/I)^*\hookrightarrow A^*$. %
        Identifying each $(A/I)^*$ with its image in $A$, we have $A^\circ = \bigcup\limits_{I\in \I(A)}(A/I)^*$ where $\I(A)$ is the set of dg ideals of $A$ of finite codimension. Degreewise,
        \[(A^n)^\circ=\{\alpha\in (A^n)^*:\alpha(I^n)=0 \text{ for some } I\in \I(A)\}.\]%
The coproduct of an $\alpha\in (A^n)^\circ$ is obtained as follows: take an $I\in \I(A)$ such that $\alpha(I^n)=0$, so we get an induced $\widetilde\alpha:A^n/I^n\to \Q$. This defines an element in $(A/I)^*$: since $A/I$ is finite-dimensional, $(A/I)^*$ has a coproduct $\Delta \widetilde\alpha\in (A/I)^*\otimes (A/I)^*\subset A^\circ \otimes A^\circ$ %
(\cref{prop-lineardual-ft}). This is independent of the choice of $I$: if $J$ is another
choice, then $I\cap J$ is again a dg ideal of finite
codimension, and the canonical inclusions $(A/I)^*,(A/J)^*\hookrightarrow(A/(I\cap J))^*$
are dg coalgebra morphisms. Define $\Delta\alpha$ to be the image of $\Delta\widetilde\alpha$ under the inclusion $(A/I)^*\otimes(A/I)^*\hookrightarrow A^\circ\otimes A^\circ$. 
The differential on $A^\circ$ is the restriction of the differential on $A^*$, and the counit  $A^\circ\to \Q$ is evaluation on the unit of $A$. 

This defines a dg coalgebra structure on $A^\circ$ which agrees with that on each $(A/I)^*$. When $A$ is commutative, $A^\circ$ is a cdgc, and this is functorial. We succinctly establish the adjunction. If $f:A\to C^*$ is a cdga morphism, then its transpose $C\to A^\circ$ is defined by $c\mapsto(a\mapsto f(a)(c))$. This does land in $A^\circ$: %
for $c\in C$, choose a finite-dimensional dg subcoalgebra $E\subset C$ containing $c$ \cite[Cor. 1.6]{Getzler-Goerss}. The functional $a\mapsto f(a)(c)$ annihilates the dg ideal
$  \ker(A\stackrel{f}{\to}C^*\to E^*)$ which has finite codimension because $E^*$ is finite-dimensional. %
The transpose of $g:C\to A^\circ$ is the cdga morphism $A\to C^*$, $a\mapsto(c\mapsto g(c)(a))$. These two transpose correspondences are natural and inverse to each other.

To see that this adjunction restricts as stated, it suffices to prove that the Sweedler dual of a connected cdga $A$ is connected, since $\CDGC_1\subset \CDGC$ and $\CDGA^1 \subset \CDGA$ are full subcategories. Since $(A^0)^\circ\subset (A^0)^*\cong \Q$ is a subspace, it suffices to produce a non-zero element in $(A^0)^\circ$. Consider $\epsilon:A\to \Q$, the augmentation of $A$. Its kernel is a dg ideal such that $A/\ker(\epsilon)\cong \Q$, so it has codimension 1. Therefore, $\epsilon^0$ belongs to $(A^0)^\circ$, %
        and it is an isomorphism, so it is the desired non-zero element.
    \end{proof}
\end{proposition}

\begin{proposition} \label{prop-dual-quillen} The adjunction $(-)^*:\CDGC_1\rightleftarrows (\CDGA^1)^\op:(-)^\circ$ is Quillen. Moreover, the functor $(-)^*$ preserves and reflects quasi-isomorphisms. %
\end{proposition}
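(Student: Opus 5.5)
To show the adjunction is Quillen, I will check that the left adjoint $(-)^*:\CDGC_1\to(\CDGA^1)^\op$ preserves cofibrations and acyclic cofibrations. In $(\CDGA^1)^\op$ with the opposite model structure, these are the opposites of fibrations and acyclic fibrations in $\CDGA^1$. Fibrations in $\CDGA^1$ are the degreewise surjections and weak equivalences are quasi-isomorphisms. So it suffices to prove two things: if $f:C\to D$ is a degreewise injection of connected cdgcs, then $f^*:D^*\to C^*$ is degreewise surjective; and if $f$ is a quasi-isomorphism, then $f^*$ is a quasi-isomorphism.

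The first claim follows from linear algebra over a field. If $f_n:C_n\to D_n$ is injective, then $f_n^*:D_n^*\to C_n^*$ is surjective, since every functional on $C_n$ extends to $D_n$ after choosing a complement of $f_n(C_n)$. The only point to check is degree $0$: there both $C_0$ and $D_0$ are $\Q$, so the map is an isomorphism anyway. Alternatively, the positive-degree cofibrations of $\CDGC_1$ are detected on $J$, and fibrations in $\CDGA^1$ only need surjectivity degreewise, which we have in every degree. This gives preservation of cofibrations.

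For quasi-isomorphisms, I will use that $\Hom_\Q(-,\Q)$ is an exact functor on vector spaces. Hence for any chain complex $V$ there is a natural isomorphism $H^n(V^*)\cong H_n(V)^*$, by the universal coefficient theorem over a field. For a chain map $f$, this identifies $H^n(f^*)$ with $H_n(f)^*$. Consequently $f$ is a quasi-isomorphism if and only if $f^*$ is one. The reverse implication uses that $W^*=0$ implies $W=0$ and that $g^*$ is an isomorphism only if $g$ is, for arbitrary (possibly infinite-dimensional) vector spaces. Both facts hold because a nonzero vector has a functional not vanishing on it, which applies to the kernel and cokernel of $H_n(f)$.

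This single argument proves both that $(-)^*$ preserves quasi-isomorphisms, and hence acyclic cofibrations combined with the first step, and that it reflects them. I do not expect a real obstacle here. The main thing to be careful about is the variance conventions: the opposite model structure turns the required "preserves cofibrations" into "sends injections to surjections". I also need that the underlying cochain complex of $C^*$ is exactly the linear dual complex $(V^*,\delta)$ defined earlier, which holds by the construction in \cref{cor:lineardual}.
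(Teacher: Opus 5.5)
Your proposal is correct and follows essentially the same route as the paper: duals of degreewise injections are degreewise surjections, and exactness of linear duality gives $H^n(f^*)\cong H_n(f)^*$. From this, preservation and reflection of quasi-isomorphisms follow, since linear duality of vector spaces reflects isomorphisms.
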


The proof of this proposition is like that of \cite[Lem. 2.22]{Roca}, which concerns (co)algebras over a cofibrant dg operad given as the operadic cobar constrution of a conilpotent curved cooperad (this is not the case in our situation). %

\begin{proof} Let $f:C\to D$ be a cofibration in $\CDGC_1$, i.e. a degreewise injective map. Its dual $f^*:D^*\to C^*$ is a cofibration in $(\CDGA^1)^\op$, i.e. a fibration in $\CDGA^1$, since the dual of an injective map of vector spaces is surjective. %

We now observe that if a morphism in $\CDGC_1$ is a quasi-isomorphism then its linear dual is so, too, so a fortiori the functor $(-)^*$ preserves acyclic cofibrations. Indeed, since any short exact sequence in $\Chaz$ splits, the functor $(-)^*:\Chaz\to (\Chz)^\op$ is exact, %
and exact functors commute with taking homology. %

We now prove the last statement. Let $f$ be a morphism in $\CDGC_1$ such that $f^*$ is a quasi-isomorphism. Again, since exact functors commute with homology, we have that $H^n(f^*)\cong H_n(f)^*$. So $H_n(f)^*$ is an isomorphism for every $n$. This implies that $H_n(f)$ is an isomorphism for every $n$, i.e. $f$ is a quasi-isomorphism, since linear duality of vector spaces reflects isomorphisms (linear duality is a faithful functor, %
so it preserves epimorphisms and monomorphisms). %
\end{proof}

We will now prove the above adjunction is a partial Quillen equivalence on the full subcategories of weak finite type objects. First, we recall that definition from \cite[Def. 3.2]{Pepe-Bruno}. 

\begin{definition} \label{def-partial}
    Let $L:\C\rightleftarrows \D:R$ be a Quillen adjunction.
    Suppose $\C_0\subset \C$ and $\D_0\subset \D$ are full subcategories such that the following hold:
    \begin{enumerate}
    \item Every $X\in \C_0$ admits a cofibrant replacement $q_X:QX\to X$ with
$QX\in \C_0$, and every $Y\in \D_0$ admits a fibrant replacement
$p_Y:Y\to PY$ with $PY\in \D_0$,
        \item If $X\in \C_0$ is cofibrant, then $LX\in \D_0$, and if $Y\in \D_0$ is fibrant, then $RY\in \C_0$, 
        \item If $X\in \C_0$ is cofibrant, then the derived unit $X\xrightarrow{\eta_X} RLX \xrightarrow{Rp_{LX}} RPLX$ is a weak equivalence, where $p_{LX}:LX\to PLX$ is a fibrant replacement, and $\eta$ is the unit of $L\dashv R$. 
        Dually, if $Y\in \D_0$ is fibrant, then the derived counit $LQRY\xrightarrow{Lq_{RY}} LR \xrightarrow{\epsilon_Y} Y$ is a weak equivalence, where $q_{RY}:QRY\to RY$ is a cofibrant replacement and $\epsilon$ is the counit of $L\dashv R$.
    \end{enumerate}
    Then we say $L:\C\rightleftarrows \D:R$ is a \emph{partial Quillen equivalence on the subcategories $\C_0$ and $\D_0$}.
\end{definition}

\begin{theorem} \label{thm-dual-partial} The adjunction $(-)^*:\CDGC_1\rightleftarrows (\CDGA^1)^\op:(-)^\circ$ is a partial Quillen equivalence on the full subcategories of the weak finite type connected cdgcs or cdgas.
\begin{proof} We already verify the adjunction is Quillen in \cref{prop-dual-quillen}. We now verify the conditions of a partial Quillen equivalence in succession.

Condition (1) is satisfied because both subcategories are closed under weak equivalences.

As for Condition (2): let $C\in \CDGC_1$ be weak finite type, so its minimal model $C\xrightarrow{\sim} M$ is of finite type. By \cref{prop-dual-ft-minimal} $M^*$ is minimal and finite-type, so by uniqueness of minimal models it is the minimal model of $C^*$. Now let $A$ be a weak finite type cofibrant cdga. Let $m:M\xrightarrow{\sim} A$ be a finite-type minimal model. Since both $A$ and $M$ are cofibrant, the right Quillen functor $(-)^\circ$ takes $m$ to a quasi-isomorphism $A^\circ \to M^\circ=M^*$. By \cref{prop-dual-ft-minimal}, $M^*$ is a finite-type minimal cdgc, proving $A^\circ$ is weak finite type.

We now verify Condition (3). Let $A\in \CDGA^1$ be cofibrant and weak finite type. Since in $\CDGC_1$ every object is cofibrant, the derived counit of $(-)^*\dashv (-)^\circ$ at $A$ is simply the counit $\epsilon_A:A\to (A^\circ)^*$. Let $m:M\to A$ be a finite-type minimal model of $A$. Naturality of the counit gives a commutative square
\[\begin{tikzcd}
	M & A \\
	{(M^\circ)^*} & {(A^\circ)^*}
	\arrow["m", "\sim"', from=1-1, to=1-2]
	\arrow["{\epsilon_M}"', "\cong", from=1-1, to=2-1]
	\arrow["{\epsilon_A}", from=1-2, to=2-2]
	\arrow["{(m^\circ)^*}"', "\sim", from=2-1, to=2-2]
\end{tikzcd}\]
Since $m$ is a weak equivalence between cofibrant objects, then $(m^\circ)^*$ is a weak equivalence. Moreover, $\epsilon_M$ is an isomorphism: it is the canonical isomorphism to the double dual, $m\mapsto (\alpha\mapsto \alpha(m))$. By 2-out-of-3, the map $\epsilon_A$ is a weak equivalence as well.

Now let $C\in \CDGC_1$ be weak finite type. Let $q:B\to C^*$ be a cofibrant replacement of $C^*$; since $C^*$ is weak finite type, so is $B$. We want to prove that the composition $C\xrightarrow{\eta_C}(C^*)^\circ \xrightarrow{q^\circ} B^\circ$ is a weak equivalence, where $\eta_C$ denotes the unit of $(-)^* \dashv (-)^\circ$ at $C$. Consider the following commutative triangle: %
\[\xymatrix{B \ar[r]^-{\epsilon_B} \ar[rd]_-q & (B^\circ)^* \ar[d]^-{(q^\circ \circ \eta_C)^*} \\ & C^*.}\]
The counit $\epsilon_B$ is a weak equivalence by what we proved in the previous paragraph. Since $q$ is also a weak equivalence, by 2-out-of-3 the map $(q^\circ \circ \eta_C)^*$ is a weak equivalence. But linear duality reflects weak equivalences \cref{prop-dual-quillen}, so $q^\circ \circ \eta_C$ is also a weak equivalence, as we wanted to show.
\end{proof}
\end{theorem}

\section{Crossed derivable adjunctions} \label{sect-crossed}

In this section, we assume our model categories to be endowed with chosen functorial (co)fibrant replacements, as in \cite{Hovey-ModelCategories}. This will make the proof of \cref{prop-crossed-derived-adj} more streamlined. %

If $L:\C\rightleftarrows \D:R$ is a Quillen adjunction, then $L$ (resp. $R$) preserves cofibrant (resp. fibrant) objects and weak equivalences between them. In general, this is not enough for the adjunction to be Quillen, %
but it is enough for some purposes. %
Following Cisinski's comment in \cite{Cisinski-MO}, we call such adjunctions \emph{derivable}. 
We now define a ``crossed'' (and partial) variant on this definition which will be useful for us in \cref{sect-Lie}, and prove some elementary properties these adjunctions satisfy.

\begin{definition} \label{def-adj-crossed} Let $L:\C\rightleftarrows \D:R$ be an adjunction between model categories. Let $\C_0\subset \C$, $\D_0\subset \D$ be full subcategories. Suppose the following conditions hold:
\begin{enumerate}
\item  $\C_0$, $\D_0$ are closed under weak equivalences, %
\item If $X\in \C_0$ is fibrant, then $L(X)\in \D_0$ and it is cofibrant; if $Y\in \D_0$ is cofibrant, then $R(Y)\in \C_0$ and it is fibrant,
\item $L$ takes weak equivalences between fibrant objects of $\C_0$ to weak equivalences, and $R$ takes weak equivalences between cofibrant objects of $\D_0$ to weak equivalences.
\end{enumerate}
Then we say $L\dashv R$ is a \emph{partial crossed derivable adjunction}. When $\C_0=\C$ and $\D_0=\D$, we say $L\dashv R$ is a \emph{crossed derivable adjunction}.
\end{definition}

\begin{remark}
In the above, we could do with less than $\C$ and $\D$ being model categories: the classes of (co)fibrations do not intervene other than in the (co)fibrant objects. However, since our guiding example for this definition in \cref{prop-LC-partial-mirror} does not need the added generality, we refrain from the generalization. Compare \cite[Sect. 2.2]{Riehl-Categorical}.
\end{remark}

If $\C_0\subset \C$ is a full subcategory of a model category, we denote by $h\C_0$ the full subcategory of $\Ho(\C)$ on the objects of $\C_0$. 

\begin{proposition} \label{prop-crossed-derived-adj}
    Let $L:\C\rightleftarrows \D:R$ be a partial crossed derivable adjunction. Then there is a derived adjunction
    \[\RL:h\C_0\rightleftarrows h\D_0:\LR,\]
    with $\RL(X)=L(X^\fib)$ and %
    $\LR(Y)=R(Y^\cof)$. %
    Here $(-)^\fib$ and $(-)^\cof$ denote the given functorial fibrant/cofibrant replacements in $\C$ and $\D$, respectively. The unit (resp. counit) of $\RL\dashv \LR$ is given on fibrant (resp. cofibrant) objects by the unit (resp. counit) of $L\dashv R$.
\end{proposition}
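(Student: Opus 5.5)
We will construct the derived functors by hand and produce a natural bijection of hom-sets in the homotopy categories, rather than appealing to a general derived-adjunction theorem, since the crossed situation is not covered by the usual Quillen machinery.

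\textbf{Step 1: the derived functors.} Define $\RL$ on objects by $X\mapsto L(X^\fib)$. Since $\C_0$ is closed under weak equivalences, $X^\fib\in\C_0$ is fibrant, so condition (2) gives $L(X^\fib)\in\D_0$, and it is cofibrant. The composite $\C_0\xrightarrow{(-)^\fib}\C_0^{\fib}\xrightarrow{L}\D_0\to h\D_0$ sends weak equivalences to isomorphisms. Indeed, $(-)^\fib$ preserves weak equivalences by 2-out-of-3, and $L$ preserves weak equivalences between fibrant objects of $\C_0$ by (3). Hence the composite factors through the localization, giving $\RL:h\C_0\to h\D_0$. Here one must make sure that $h\C_0$, the full subcategory of $\Ho(\C)$, is the localization of $\C_0$ at its weak equivalences. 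Alternatively, define $\RL$ on morphisms directly, using that morphisms in $\Ho(\C)$ between fibrant objects are represented by zig-zags of weak equivalences and maps. Closure under weak equivalences ensures that a cofibrant–fibrant replacement zig-zag stays in $\C_0$, which should suffice. Dually, $\LR(Y)=R(Y^\cof)$, which is fibrant and lies in $\C_0$.

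\textbf{Step 2: the adjunction.} Restrict to $X$ fibrant in $\C_0$ and $Y$ cofibrant in $\D_0$; this is harmless since every object is isomorphic in the homotopy category to such a replacement. We need $\Ho(\D)(LX,Y)\cong\Ho(\C)(X,RY)$, where $LX$ is cofibrant and $RY$ is fibrant. The hom-sets on each side are not computed by homotopy classes of maps directly, because $X$ need not be cofibrant and $Y$ need not be fibrant. Instead, use the unit $\eta_X:X\to RLX$ and the counit $\epsilon_Y:LRY\to Y$. Send $g\in\Ho(\D)(LX,Y)$ to $\LR(g)\circ\eta_X$, where $\LR(g):R(LX)\to R(Y)$ makes sense because $LX$ and $Y$ are cofibrant in $\D_0$. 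Send $f$ to $\epsilon_Y\circ\RL(f)$. The key point is that $\eta_X$ is a morphism in $\C_0$ between fibrant objects, and $\epsilon_Y$ one in $\D_0$ between cofibrant objects, so they define natural transformations $\id\Rightarrow\LR\RL$ and $\RL\LR\Rightarrow\id$ on $h\C_0$ and $h\D_0$.

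Naturality of these transformations with respect to morphisms of the homotopy categories requires care. It follows from naturality of $\eta$ and $\epsilon$ for actual maps, together with compatibility with inverted weak equivalences; the latter holds because $\RL$ and $\LR$ are built from $L$ and $R$ on fibrant and cofibrant replacements.

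\textbf{Step 3: the triangle identities.} These follow from the strict triangle identities for $L\dashv R$, applied to fibrant $X$ and cofibrant $Y$. One checks that $\RL(\eta_X)$ is represented by $L\eta_X$, possibly up to the canonical comparison with $L((RLX)^\fib)$, and similarly for $\LR(\epsilon_Y)$. This identification is routine given functorial replacements.

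The main obstacle I expect is the well-definedness of $\RL$ and $\LR$ on morphisms together with the naturality of the derived unit and counit. The homotopy categories are not the usual cofibrant–fibrant homotopy classes, so the argument must go through the universal property of localization restricted to the full subcategories $\C_0^{\fib}$ and $\D_0^{\cof}$. I would first try to show that the inclusion $\C_0^{\fib}\hookrightarrow\C_0$ induces an equivalence of localizations, with inverse induced by $(-)^\fib$.
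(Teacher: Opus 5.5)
Your proposal is correct and uses the same ingredients as the paper: restricting $L\dashv R$ to $\C_{0,f}\rightleftarrows\D_{0,c}$ via condition (2), noting that both restricted functors preserve weak equivalences by condition (3), and identifying $\C_{0,f}[w^{-1}]\simeq\C_0[w^{-1}]\simeq h\C_0$ via $(-)^\fib$ and closure under weak equivalences (dually for $\D$), which is exactly the lemma you flag at the end. The only difference is ordering: the paper applies that lemma first, descending the restricted adjunction to $\C_{0,f}[w^{-1}]\rightleftarrows\D_{0,c}[w^{-1}]$, where the unit, counit, naturality and triangle identities are inherited automatically from $L\dashv R$, and then transports along the equivalences, so the hand verifications you anticipate in Steps 2--3 never need to be done directly in $h\C_0$ and $h\D_0$.
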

\begin{proof} Let $\C_{0,f}\subset \C$ denote the full subcategory of $\C$ on the fibrant objects of $\C_0$ and $\D_{0,c}\subset \D$ denote the full subcategory of $\D$ on the cofibrant objects of $\D_0$. By Condition (2), the adjunction $L\dashv R$ restricts to an adjunction $L:\C_{0,f}\rightleftarrows \D_{0,c}:R$. By Condition (3), these $L$ and $R$ preserve weak equivalences, so they induce an adjunction in localized categories $\overline{L}:\C_{0,f}[w^{-1}]\rightleftarrows \D_{0,c}[w^{-1}]:\overline{R}$, where $w$ denotes the class of weak equivalences.

Now let $i:\C_{0,f}\hookrightarrow \C_0$ denote the inclusion. It preserves weak equivalences, so it induces a functor $\C_{0,f}[w^{-1}]\to \C_0[w^{-1}]$. It is an equivalence of categories: its pseudo-inverse is given by $(-)^\fib$, which restricts to $\C_0$ by Condition (1). Similarly, we get an equivalence of categories $\D_{0,c}[w^{-1}]\simeq \D_0[w^{-1}]$.

Going further, we observe that $\C_0[w^{-1}]\simeq h\C_0$. Indeed, the standard proof that $\Ho(\C)\cong \C[w^{-1}]$ \cite[Thm. 1.2.10]{Hovey-ModelCategories} works within $\C_0$. Putting all this together, we obtain:
\[\begin{tikzcd}
	{h\C_0\simeq \C_0[w^{-1}] \simeq \C_{0,f}[w^{-1}]} & {\D_{0,c}[w^{-1}]\simeq \D_0[w^{-1}]\simeq h\D_0.}
	\arrow[""{name=0, anchor=center, inner sep=0}, "{\overline{L}}", from=1-1, to=1-2]
	\arrow[""{name=1, anchor=center, inner sep=0}, "{\overline{R}}", shift left=3, from=1-2, to=1-1]
	\arrow["\dashv"{anchor=center, rotate=-90}, draw=none, from=0, to=1]
\end{tikzcd}\]
Composing the functors, we obtain the desired derived adjunction. The assertion on the (co)unit is immediate, since the (co)unit of $\overline{L}\dashv \overline{R}$ is that of $L\dashv R$.
\end{proof}

\begin{remark} One could have guessed that (2) is not necessary in \cref{def-adj-crossed} %
for there to be a derived adjunction $\RL\dashv \LR$ as above. Indeed, in the non-partial, non-crossed case the analogous condition to (3) is enough: if $L\dashv R$ is an adjunction and $L$ (resp. $R$) preserves weak equivalences between cofibrant (resp. fibrant) objects, then the derived adjunction exists \cite[Page 5]{Maltsiniotis}. %
However, in the crossed situation this is not enough, as the following simple example shows:

Consider the tensor-hom adjunction  \[L=\Q\otimes_\Z:\Ch(\Z)\rightleftarrows\Ch(\Z):\Hom_\Z(\Q,-)=R\] on unbounded chain complexes of abelian groups. %
Endow this category with the projective model structure \cite[Thm. 2.3.11]{Hovey-ModelCategories}. Now, since $\Q$ is flat then $L$ preserves all weak equivalences. On the other hand, if $P\in \Ch(\Z)$ is cofibrant, then each $P_n$ is a free abelian group \cite[Lem. 2.3.6]{Hovey-ModelCategories}.  Since $\Hom_\Z(\Q,P_n)=0$ (a free abelian group has no non-zero divisible subgroup), then $R(P)=0$. %
Therefore $R$ is zero on cofibrant objects, so it preserves weak equivalences between them.

Thus, the functors $\RL$ and $\LR$ exist, and they satisfy $\RL=\Q\otimes_\Z -:\D(\Z)\to \D(\Z)$, $\LR=0$. But $\RL$ is not left adjoint to $\LR$: compare $\Hom_{\D(\Z)}(\RL(\Q),\Q)\cong \Q$ with $\Hom_{\D(\Z)}(\Q,\LR(\Q))=0$. The same example shows $\RL$ is not right adjoint to $\LR$, either. This does not contradict the proposition, as this $L$ does not satisfy (2), e.g. $L(\Q)$ is not degree-wise free, so it is not cofibrant. %
\end{remark}

We now turn to (partial) crossed derivable equivalences.

\begin{proposition} \label{prop-crossed-partial-quillen-eq} Let $L:\C\rightleftarrows \D:R$ be a partial crossed derivable adjunction on the full subcategories $\C_0\subset \C$, $\D_0\subset \D$. The following are equivalent:
\begin{enumerate}
    \item[a)] If $X\in \C_0$ is fibrant, then the unit $\eta_X:X\to RLX$ is a weak equivalence, and if $Y\in \D_0$ is cofibrant, then the counit $\epsilon_Y:LRY\to Y$ is a weak equivalence. 
    \item[b)] A map $LX\to Y$ is a weak equivalence if and only if its transpose $X\to RY$ is a weak equivalence, for all $X\in \C_0$ fibrant and $Y\in \D_0$ cofibrant,
        \item[c)] The adjunction $\RL:h\C_0\rightleftarrows h\D_0:\LR$ is an adjoint equivalence.
\end{enumerate}
\begin{proof}
Since the (co)unit of $\RL\dashv \LR$ is that of $L\dashv R$, the proof is essentially the same as the analogous statement characterizing adjoint equivalences of ordinary categories. %
\end{proof}
\end{proposition}

\begin{definition} In the situation of \cref{prop-crossed-partial-quillen-eq}, we say $L\dashv R$ is a \emph{partial crossed derivable equivalence on the subcategories $\C_0$ and $\D_0$}. When $\C_0=\C$ and $\D_0=\D$, we speak of a \emph{crossed derivable equivalence}. 
\end{definition}

\section{\texorpdfstring{Equivalences of $\infty$-categories}{Equivalences of infty-categories}}

As is well-known, a Quillen equivalence of model categories is more than an equivalence of their homotopy categories: it is an equivalence of homotopy theories, i.e. of $(\infty,1)$-categories. 
This can be made meaningful in several ways, according to how one interprets ``($\infty$,1)-category''. Interpreting it as an $\infty$-category (in the sense of Joyal and Lurie), this result follows from \cite[Thm. 2.1]{Mazel-Gee}.

We will now prove this is also true for partial Quillen equivalences and for partial crossed derivable equivalences. %

Recall that a \emph{relative category} is a pair $(\C,w)$ where $\C$ is a category and $w\subset \C$ is a wide subcategory whose morphisms are called \emph{weak equivalences}. We let $\C^\infty$ be the $\infty$-categorical localization of the nerve of $\C$ at $w$, as in \cite[Def. 1.3.4.1, Rmk. 1.3.4.2]{Lurie-HA}: this is an $\infty$-category. It comes with a functor $q:N(\C)\to \C^\infty$ with the following universal property: for every $\infty$-category $\D$, precomposition induces an equivalence
\begin{equation} \label{eq-univ-loc} q^*: \Fun(\C^\infty, \D)\xrightarrow{\sim} \Fun_w(N(\C), \D)\end{equation}
where $\Fun_w$ denotes the full subcategory on functors that send weak equivalences to equivalences. %

\begin{lemma} \label{lem-relcat-equiv}
Let $\A$ and $\B$ be relative categories and $L:\A\rightleftarrows \B: R$ %
be two functors that preserve weak equivalences. Suppose we are given natural transformations $\eta:\id_\A\Rightarrow RL$, $\epsilon:LR\Rightarrow \id_\B$ which are weak equivalences component-wise. Then there are induced functors $L^\infty:\A^\infty\rightleftarrows \B^\infty:R^\infty$ and they are mutually inverse equivalences of $\infty$-categories.
\end{lemma}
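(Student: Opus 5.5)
The plan is to use the universal property \eqref{eq-univ-loc} of the $\infty$-categorical localization to construct the functors, and then to induce the unit and counit from $\eta$ and $\epsilon$.

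\emph{Constructing $L^\infty$ and $R^\infty$.} Write $q_\A:N(\A)\to\A^\infty$ and $q_\B:N(\B)\to\B^\infty$. Since $L$ preserves weak equivalences, the composite $q_\B\circ N(L):N(\A)\to \B^\infty$ sends weak equivalences to equivalences, so it lies in $\Fun_w(N(\A),\B^\infty)$. By \eqref{eq-univ-loc} there is a functor $L^\infty:\A^\infty\to\B^\infty$, unique up to equivalence, with an equivalence $L^\infty q_\A\simeq q_\B N(L)$. The functor $R^\infty$ is constructed in the same way.

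\emph{Comparing $R^\infty L^\infty$ with the identity.} We have $R^\infty L^\infty q_\A\simeq R^\infty q_\B N(L)\simeq q_\A N(R)N(L)= q_\A N(RL)$. The transformation $\eta$ gives a natural transformation $N(\eta):N(\id_\A)\Rightarrow N(RL)$, which is a functor $\Delta^1\times N(\A)\to N(\A)$. Whiskering with $q_\A$ yields a morphism $q_\A\Rightarrow q_\A N(RL)$ in $\Fun(N(\A),\A^\infty)$. Each of its components is $q_\A(\eta_X)$; since $\eta_X$ is a weak equivalence, this component is an equivalence in $\A^\infty$. Equivalences in functor $\infty$-categories are detected pointwise, so this morphism is an equivalence in $\Fun_w(N(\A),\A^\infty)$. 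Both its source and target lie in that full subcategory, because $q_\A$ and $q_\A N(RL)$ invert weak equivalences. The precomposition functor $q_\A^*$ is an equivalence, in particular fully faithful and conservative. It follows that the equivalence $q_\A\simeq q_\A N(RL)\simeq R^\infty L^\infty q_\A$ lifts to an equivalence $\id_{\A^\infty}\simeq R^\infty L^\infty$ in $\Fun(\A^\infty,\A^\infty)$.

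\emph{Comparing $L^\infty R^\infty$ with the identity.} The argument is symmetric, using $\epsilon$: it gives $L^\infty R^\infty\simeq \id_{\B^\infty}$. Together the two comparisons show that $L^\infty$ and $R^\infty$ are mutually inverse equivalences. No triangle identities are needed, since an equivalence of $\infty$-categories only requires the two composites to be equivalent to the identities. If one wants an adjoint equivalence, it can be obtained from these data by the standard promotion argument.

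The main obstacle is bookkeeping: the whiskered transformation has to live in $\Fun_w$, and the identifications $L^\infty q_\A\simeq q_\B N(L)$ are only equivalences, not equalities. Both points are handled by working throughout in the homotopy category of $\Fun(N(\A),\A^\infty)$, where $q_\A^*$ is an equivalence and therefore reflects isomorphisms.
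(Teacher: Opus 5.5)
Your proposal is correct and follows essentially the same route as the paper. Both arguments construct $L^\infty$ and $R^\infty$ from the universal property of the localization, whisker $N(\eta)$ and $N(\epsilon)$ with the localization functors to get pointwise, hence global, equivalences $q\Rightarrow qN(RL)\simeq R^\infty L^\infty q$, and then lift these along the fully faithful precomposition functor $q^*$, which reflects equivalences.
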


A similar result appears as early as \cite[Cor. 3.6]{Dwyer-Kan-Calculating}. There, $\eta$ and $\epsilon$ are assumed to be the unit and the counit of an adjunction $L\dashv R$, and the result is a Dwyer--Kan equivalence of simplicial categories.

\begin{proof}
    Since $L$ preserves weak equivalences, the composite $N(\A) \xrightarrow{N(L)} N(\B) \xrightarrow{q} \B^\infty$ sends weak equivalences to equivalences. By the universal property of localization, it factors through a functor $L^\infty:\A^\infty\to \B^\infty$, with a natural equivalence $L^\infty \circ q \simeq q\circ N(L)$. Similarly, we get an $R^\infty:\B^\infty\to \A^\infty$ such that $R^\infty \circ q \simeq q \circ N(R)$.

    Now apply the nerve functor to $\eta$, so we obtain a natural transformation $\id\Rightarrow N(RL)$ \cite[01DJ]{Kerodon}. Composing with $q$ gives a natural transformation $\widetilde \eta:q\Rightarrow qN(RL)\simeq R^\infty L^\infty q$. Every component of it is an equivalence, since $\eta$ is component-wise a weak equivalence. Therefore,  it is an equivalence (in its functor $\infty$-category), by \cite[01DK]{Kerodon}.

    The universal property of localization (\ref{eq-univ-loc}) applied to $\D=\A^\infty$ implies \[q^*:\Fun(\A^\infty, \A^\infty)\to \Fun(N(\A), \A^\infty)\] is fully faithful. In particular, $\widetilde\eta$ lifts to an $\eta^\infty:\id_{\A^\infty}\Rightarrow R^\infty L^\infty$, which is an equivalence since fully faithful functors reflect equivalences.

    We analogously get an $\epsilon^\infty:L^\infty R^\infty \Rightarrow \id_{\B^\infty}$, so $L^\infty$ and $R^\infty$ are mutually inverse equivalences.
\end{proof}

    If $\C$ is a model category and $\C_0\subset \C$ is a full subcategory not necessarily closed under weak equivalences (as can be the case in a partial Quillen equivalence, but not in a crossed one), then we will denote by $\C_0^\infty$  the full subcategory of $\C^\infty$ on the objects of $\C_0$. This is equivalent to the $\infty$-localization of $\C_0$ at the weak equivalences when $\C_0$ is closed under them (see for instance \cite[Sect. 6 and 8]{Dwyer-Kan-Calculating} in the setting of simplicial categories), %
    but in general they may differ.

\begin{proposition} \label{prop-infty}
    Let $L:\C\rightleftarrows \D:R$ be a partial Quillen equivalence or a partial crossed derivable equivalence on the full subcategories $\C_0\subset \C$, $\D_0\subset \D$. There is an induced equivalence of $\infty$-categories $\C_0^\infty\simeq \D_0^\infty$. %
   
    \begin{proof}
$\bullet$ We first treat the crossed case. Consider the relative categories $\C_{0,f}$ of fibrant objects in $\C_0$ with the weak equivalences of $\C$ and $\D_{0,c}$ of cofibrant objects in $\D_0$ with the weak equivalences of $\D$. The definition of a partial crossed derivable equivalence implies we have an adjunction $L:\C_{0,f} \rightleftarrows \D_{0,c}:R$ satisfying the hypotheses of \cref{lem-relcat-equiv}. So we get that $L^\infty:\C_{0,f}^\infty\rightleftarrows \D_{0,c}^\infty:R^\infty$ are mutually inverse equivalences of $\infty$-categories.

By \cite[Rmk. 1.3.4.16]{Lurie-HA}, $\C_{0,f}^\infty\simeq C_0^\infty$ and $\D_{0,c}^\infty\simeq \D_0^\infty$, induced by the functorial (co)fibrant replacements of $\C$ and $\D$. Putting this together with the above furnishes the desired equivalence of $\infty$-categories. 

         $\bullet$ %
         We now treat the non-crossed case. Since $L\dashv R$ is a Quillen adjunction, it induces an adjunction of underlying $\infty$-categories $\C^\infty \rightleftarrows \D^\infty$ \cite[Thm. 2.1]{Mazel-Gee}. A technical point: Mazel-Gee's underlying $\infty$-category to $\C$ is different from what we denote $\C^\infty$. However, it is equivalent to $\C^\infty$, by \cite[After Def. 1.3.1]{Hinich-Dwyer-Kan}.

         The conditions of a partial Quillen equivalence imply that this adjunction restricts to $\C_0^\infty$ and $\D_0^\infty$, where it is an equivalence because it induces an equivalence on homotopy categories. %

         The fact that $\C_0$ and $\D_0$ may not be closed under weak equivalences is not an issue, since we can pass to their closure under weak equivalences without affecting the argument \cite[Rmk. 1.2.10.2]{LurieHTT}.
    \end{proof}
\end{proposition}

\begin{remark} \label{rmk-mapping-spaces}
    In particular, if $L:\C\rightleftarrows \D:R$ is a partial crossed derivable equivalence on the full subcategories $\C_0\subset \C$, $\D_0\subset \D$, then there are weak equivalences of mapping spaces
    \[\Map_{\C_0^\infty}(X,Y) \simeq \Map_{\D_0^\infty}(L(X^\fib), L(Y^\fib)).\] %
    If $L:\C\rightleftarrows \D:R$ is a partial Quillen equivalence on $\C_0$ and $\D_0$, we get a weak equivalence of mapping spaces
    \[\Map_{\C_0^\infty}(X,Y) \simeq \Map_{\D_0^\infty}(L(X^\cof), L(Y^\cof)).\]

    Taking $\pi_0$ of these weak equivalences recovers the corresponding bijections of morphism sets in the homotopy categories.
\end{remark}

\begin{remark} Applying \cref{prop-infty} to the partial Quillen equivalence in  \cref{thm-dual-partial} gives a contravariant equivalence of $\infty$-categories between the weak finite type connected cdgcs and cdgas. Compare this with \cite[Thm. 2.26]{Roca} for a similar equivalence whose proof does not adapt to our context since the corresponding dg operads here are not cofibrant.
\end{remark}

\section{Lie algebras} \label{sect-Lie}

We let $\DGL$ denote the category of dg Lie algebras (dgls) in $\Chaz$. We denote by $Q(L)=L/[L,L]$ the \emph{abelianization} of $L$ and by $\LL(V)$ the free graded Lie algebra on a non-negatively graded vector space $V$. The abelianization of a dgl of the form $(\LL(V),d)$ is isomorphic to $V$. A dgl $L$ is \emph{minimal} if its underlying graded Lie algebra is free and $dL\subset [L,L]$, or equivalently, the induced differential on $Q(L)$ is zero \cite[Page 437]{Neisendorfer}. A graded Lie algebra $L$ is \emph{nilpotent} if its descending central series $L\supset [L,L]\supset \cdots$ terminates at zero in each degree; for the definition of when a dgl is a \emph{nilpotent completion}, see \cite[Def. 3.4]{Neisendorfer}.

Neisendorfer \cite[Prop. 5.2]{Neisendorfer} establishes a model structure on $\DGL$, where the weak equivalences are the quasi-isomorphisms. Every object is fibrant and the cofibrant objects are precisely the dgls with underlying free graded Lie algebra \cite[Page 449]{Neisendorfer}.

There is a small inaccuracy in \cite{Neisendorfer} regarding the fibrations in $\DGL$: they are declared to be the degreewise surjections, rather than the degreewise surjections \emph{in positive degrees}. That would have been the more natural analog of Quillen's fibrations (surjections in degrees $\geq 2$) from \cite[Thm. II.5.1]{Quillen-rht} in the model structure on dgls concentrated in positive degrees. 
The author even asserts \cite[p.445]{Neisendorfer}:  \textit{ Although Quillen [...] uses a slightly different notion of fibration, his proof that they form a closed model category remains valid in this more general context.}
In fact, such a model structure on $\DGL$ cannot exist. The only source we have found that sets Neisendorfer's fibrations straight is \cite[p.45]{Tanre83}, who does not write that Neisendorfer's fibrations are not exactly those. 
He does not prove that Neisendorfer's structure with his stated fibrations cannot exist, but this is not hard:

\begin{proposition} The model structure %
on $\DGL$ with quasi-isomorphisms as weak equivalences and degreewise surjections as fibrations 
does not exist. 
    \begin{proof}
First note that if $g:C\to B$ is a morphism in $\Chaz$ such that $g_0:C_0\to B_0$ is surjective, then $H_0(g):H_0(C)\to H_0(B)$ is surjective. %
Now, if the claimed model structure on $\DGL$ existed, 
then any morphism $f:A\to B$ would factor as an acyclic cofibration followed by a fibration: 
$A\stackrel{\sim}{\rightarrowtail} C\twoheadrightarrow B$.
Taking homology, this implies that $H_0(f):H_0(A)\to H_0(B)$ is surjective. 
This is not always true, of course: take $f$ to be $0\to \Q$,
where $\Q$ is the abelian 1-dimensional dg Lie algebra concentrated in degree 0.
\end{proof}
\end{proposition}

The above hiccup regarding the fibrations in $\DGL$ does not pose a problem: once they are taken to be surjective in positive degrees, everything else goes through. This is the model structure we consider in $\DGL$ from now on.  %

There is an adjunction  $\L:\CDGC_1\rightleftarrows \DGL:\C$ \cite[Sect. B.6]{Quillen-rht}, see also \cite[Sect. 4]{Neisendorfer}, where $\C$ is the dg version of the Chevalley--Eilenberg chain complex for computing Lie algebra homology, and $\L$ outputs the dg Lie algebra of primitive elements in the cobar construction.  More explicitly: 
For a connected cdgc $C$, $\L(C)=(\LL(s^{-1}\overline{C}),d)$ where $d$ is determined from the differential and the coproduct of $C$. For a dgl $L$, $\C(L)=(\Lambda^c(sL),d)$ where $d$ is defined using the differential and the bracket of $L$.

\begin{theorem} \label{prop-LC-partial-mirror} The adjunction  $\L:\CDGC_1\rightleftarrows \DGL:\C$ is a partial crossed derivable equivalence on the connected cdgcs weakly of finite type and the dgls $L$ such that $H_*(L)$ is nilpotent and of finite type.
\end{theorem}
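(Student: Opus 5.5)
We check the three conditions of \cref{def-adj-crossed}, and then condition a) of \cref{prop-crossed-partial-quillen-eq}. Throughout, $\C_0\subset\CDGC_1$ denotes the connected cdgcs weakly of finite type and $\D_0\subset\DGL$ the dgls with $H_*$ nilpotent and of finite type.

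\textbf{Closure under weak equivalences.} For $\C_0$ this holds because minimal models are invariant under quasi-isomorphism. For $\D_0$ it is immediate, since the defining condition only involves $H_*(L)$ with its induced graded Lie bracket.

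\textbf{Condition (2).} Let $X\in\C_0$ be fibrant. The dgl $\L(X)=(\LL(s^{-1}\overline X),d)$ is free as a graded Lie algebra, so it is cofibrant in $\DGL$. It remains to show $\L(X)\in\D_0$. Let $X\to M$ be the minimal model, with $M$ of finite type. We use Neisendorfer's computation that for minimal $M$ there is a quasi-isomorphism $\L(M)\to s^{-1}PM$. The target is a nilpotent completion whose homology is $s^{-1}PM$ (since $d_{PM}=0$), which is of finite type. Nilpotency of this homology should follow from \cite[Sect. 3, 7]{Neisendorfer}, using that a finite-type nilpotent completion is nilpotent degreewise.

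To get from $M$ back to $X$ we need that $\L(X)\to\L(M)$ is a quasi-isomorphism. This is exactly condition (3) for $\L$. I will prove it by filtering by bracket length: the associated graded of $\L(X)$ is the free Lie algebra on $s^{-1}\overline X$ with the linear differential, so a quasi-isomorphism $\overline X\to\overline M$ gives a quasi-isomorphism on associated graded pieces.

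The convergence of this filtration is where fibrancy must enter. For arbitrary cdgcs the comparison fails, which is \cref{rmk-LC-not-quillen}. I expect to use Neisendorfer's characterization of fibrant cdgcs as retracts of iterated pullbacks of $D^c$'s (his Prop. 3.11), and to argue one cell at a time. This step is the \textbf{main obstacle}, and I would follow Neisendorfer \cite[Prop. 7.2]{Neisendorfer} closely here.

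Dually, let $Y\in\D_0$ be cofibrant, so $Y=(\LL V,d)$. Then $\C(Y)=(\Lambda^c(sY),d)$. This is fibrant, because $\C$ of a free dgl is a cofree-type object whose primitives are suitably complete; this again comes from Neisendorfer's results. Its minimal model is $\C$ of the minimal model of $Y$. The minimal model $(\LL W,d)$ of $Y$ has $W\cong H_*(Q Y)$, computed via the abelianization. Finite type of $W$ follows from $H_*(Y)$ being of finite type together with the nilpotency hypothesis (the nilpotent case of Neisendorfer's §6–7). Hence $\C(Y)\in\C_0$.

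\textbf{Condition (3).} For $\C$, the map $\C$ applied to a quasi-isomorphism between cofibrant dgls is a quasi-isomorphism. This follows from the standard word-length filtration spectral sequence on $\Lambda^c(sL)$, which converges because everything is non-negatively graded and the filtration is exhaustive and bounded below in each degree. The statement for $\L$ on fibrant objects of $\C_0$ is the main obstacle discussed above.

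\textbf{Unit and counit.} For cofibrant $Y\in\D_0$, the counit $\L\C Y\to Y$ is a quasi-isomorphism for every dgl; this is classical Quillen–Neisendorfer bar–cobar theory, proved by filtering. For fibrant $X\in\C_0$, the unit $X\to\C\L X$ needs the most care. I would reduce to $X=M$ minimal using naturality of the unit together with condition (3) on both sides. For $M$ minimal, the composite $M\to\C\L M\to\C(s^{-1}PM)$ is a quasi-isomorphism by \cite[Prop. 7.3]{Neisendorfer}. By 2-out-of-3 the unit at $M$ is then a quasi-isomorphism, which establishes condition a).
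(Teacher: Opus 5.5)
Your outline matches the paper's: conditions (1)--(3) of \cref{def-adj-crossed}, then condition a). Your arguments that $\C$ preserves quasi-isomorphisms and that the counit is a quasi-isomorphism are fine. Several key steps, however, are false or missing.

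\textbf{The unit, and condition (2) for $\L$.} You use a dgl quasi-isomorphism $\L(M)\to s^{-1}PM$, and a quasi-isomorphism $M\to\C\L M\to\C(s^{-1}PM)$, for minimal $M$. Either would make every nilpotent finite-type space coformal, which is false. For the minimal cdgc $M$ of $\mathbb{CP}^2$, $s^{-1}PM$ is abelian on classes of degrees $1$ and $4$. So $H_*\C(s^{-1}PM)=\Lambda^c(\xi_2,\eta_5)$ is infinite-dimensional, while $H_*(M)$ is $3$-dimensional. Neisendorfer only provides the homology statement $H_*\L(C)\cong s^{-1}H_*(PC)$ for fibrant $C$ \cite[Prop. 6.3b]{Neisendorfer}. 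Apply this directly to the fibrant $X$, together with \cref{lem-wft-cdgc}(3) and the fact that $s^{-1}H_*(PX)$ is a finite-type nilpotent completion, hence nilpotent. That gives $\L(X)\in\D_0$ with no detour through $M$, and so without using condition (3) for $\L$. The unit also needs no reduction: $C\to\C\L C$ is a quasi-isomorphism for every connected cdgc, just as the counit is for every dgl \cite[Prop. 4.1]{Neisendorfer}.

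\textbf{Condition (3) for $\L$.} You leave this open, and the bracket-length spectral sequence cannot close it. It only detects the bracket-length completion of $\L(C)$. Indeed, the map $p$ of \cref{rmk-LC-not-quillen} induces an isomorphism on $E^1$, yet $\L(p)$ is not a quasi-isomorphism. The result is \cite[Prop. 6.4]{Neisendorfer}. Alternatively, it follows from the natural isomorphism above plus homotopy invariance of $P$ between fibrant objects. Here $P$ is right Quillen, being right adjoint to $V\mapsto\Q\oplus V$ with $V$ primitive.

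\textbf{Fibrancy of $\C(Y)$.} Fibrancy is not automatic for $\C$ of a free dgl. For a free model $\widetilde L$ of the dgl $L$ of \cref{rmk-LC-not-quillen}, $\C(\widetilde L)$ is not fibrant. The nilpotency of $H_*(Y)$ is exactly what is needed. By \cite[Prop. 4.3a]{Neisendorfer}, $s^{-1}H_*P\C(Y)\cong H_*(Y)$ as Lie algebras, so it is a nilpotent completion. Then \cite[Prop. 5.9]{Neisendorfer} gives fibrancy.

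\textbf{Weak finite type of $\C(Y)$.} Your argument here is also wrong. $\C(\LL W,d)$ is not the minimal model of $\C(Y)$, because $P\C(\LL W,d)=s\LL W$ carries the generally nonzero differential induced by $d$. Nor is it of finite type in general, even when $W$ is. Once $\dim W_0\geq 2$, $\LL W$ is infinite-dimensional in degree $0$. An example is the minimal model $(\LL(a,b,u),\,du=[a,b])$ of the abelian dgl $\Q^2$ in degree $0$. What works is this: $H_*P\C(Y)\cong sH_*(Y)$ is of finite type and $\C(Y)$ is fibrant, so \cref{lem-wft-cdgc}(3) shows $\C(Y)$ is weakly of finite type.
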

\begin{proof}
    We first prove this is a crossed derivable adjunction. We check the three conditions:
    
    \begin{enumerate}
    \item It is clear the subcategories are closed under weak equivalences.
        \item Since dgls in the image of $\L$ have a free underlying graded Lie algebra by definition, they are cofibrant. %
        Now suppose $C\in \CDGC_1$ is fibrant and weakly of finite type, so $H_*(PC)$ is finite-type by \cref{lem-wft-cdgc}. By \cite[Prop. 6.3b]{Neisendorfer}, we have $H_*\L(C)\cong s^{-1}H_*(PC)$. Since the latter is finite-type and a nilpotent completion \cite[Prop. 6.2]{Neisendorfer}, it is nilpotent. %
        Thus $H_*\L(C)$ is nilpotent and of finite type.
        
        In the other direction, suppose $L\in \DGL$ is a cofibrant dgl such that $H_*(L)$ is nilpotent and finite-type. By \cite[Prop. 4.3a]{Neisendorfer}, $H_*L\cong s^{-1}H_*P\C(L)$ as Lie algebras, so the latter is also nilpotent, hence a nilpotent completion \cite[Def. 3.4]{Neisendorfer}. By \cite[Prop. 5.9]{Neisendorfer}, a connected cdgc $C$ is fibrant if and only if its underlying graded coalgebra is cofree and $s^{-1}H_*P(C)$ is a nilpotent completion, so $\C(L)$ is fibrant. We now show it is weakly of finite type. Equivalently (\cref{lem-wft-cdgc}), we have to show $H_*(P\C(L))$ is finite-type. %
        But shifting the isomorphism mentioned above shows this is isomorphic to $sH_*(L)$, which is finite-type, so we are done.
        \item That $\L$ preserves weak equivalences between fibrant objects is \cite[Prop. 6.4]{Neisendorfer}, and that $\C$ preserves weak equivalences between dgls is \cite[Prop. 4.4a]{Neisendorfer}.
    \end{enumerate}
Finally, the components of the unit and counit of $\L\dashv \C$ are weak equivalences: this is \cite[Prop. 4.1]{Neisendorfer}, so $\L\dashv \C$ is indeed a partial crossed derivable equivalence on the specified subcategories.
\end{proof}

\begin{remark} \label{rmk-LC-not-quillen} Let us show that $\L\dashv \C$ is not a Quillen adjunction, or even a (non-crossed) derivable adjunction. We display a weak equivalence in $\CDGC_1$ which is not mapped by $\L$ to a weak equivalence of dgls: this is enough since every object of $\CDGC_1$ is cofibrant.

$\bullet$ Let $L$ be the dgl $\Q\{a,b\}$, $|a|=|b|=0$ with zero differential and $[a,b]=b$. Consider \[C:=\C(L)=(\Lambda^c(sL),d)=(\Q\{1,e,f,w\},d),\] where $e=sa$, $f=sb$, $w=e\otimes f -f\otimes e$, %
so $|e|=|f|=1$, $|w|=2$. The coproduct on $\overline{\C(L)}$ given by  $\overline{\Delta}(e)=\overline{\Delta}(f)=0$, $\overline{\Delta}(w)=w$. 
The differential is $de=df=0$, $dw=s[a,b]=f$.

Consider the dgl quotient map $\pi:L\to \Q{a}$ to the abelian dgl $\Q{a}$. We let \[D:=\C(\Q{a})=(\Q\{1,e\},0),\] so applying $\C$ to $\pi$ gives the cdgc map $p:C\to D$, $p(1)=1$, $p(e)=e$, $p(f)=p(w)=0$. Its kernel is the acyclic complex $(\cdots \to \Q{w}\underset{\cong}{\overset{d}{\to}} \Q{f}\to 0)$, so $p$ is a weak equivalence.

Now apply $\L$. We have \[\L(C)=(\LL(A,B,U),d) \quad \text{and} \quad \L(D)=(\Q{A},0),\] where $A=s^{-1}e, B=s^{-1}f, U=s^{-1}w$, so $|A|=|B|=0$, $|U|=1$, with $dA=dB=0$, $dU=[A,B]-B$. %
We compute $H_0\L(C)$: it is the quotient of $\LL(A,B)$ by the Lie ideal generated by $dU$, so it is isomorphic to $L$. Thus, $H_0\L(p)$ is the map $L\to \Q{a}$, $a\mapsto a$, $b\mapsto 0$. This is not an isomorphism, so $\L(p)$ is not a quasi-isomorphism. 

$\bullet$ Since $\L\dashv \C$ is, on the other hand, a \emph{crossed} derivable adjunction, this means that one of $C$ or $D$ cannot be fibrant. Indeed, $C$ is not fibrant: if it were, then $s^{-1}H_*(PC)$, which is isomorphic to $H_*(L)\cong L$ (\cite[Prop. 4.3a]{Neisendorfer}), would have to be a nilpotent completion. But this cannot be, as we will now show.

Since $L$ is concentrated in degree 0, then it is a nilpotent completion in the sense of \cite[Defs. 3.1, 3.4]{Neisendorfer} precisely if $L\cong \varprojlim_r A/\Gamma^rA$ for some graded Lie algebra $A$, where $\Gamma^1A=A$, $\Gamma^{r+1}A=[A,\Gamma^rA]$. %
In particular, passing to the 0-th coordinate, $L$ would be the inverse limit of nilpotent Lie algebras, i.e. we would have an isomorphism $\theta:L\to \varprojlim_r N_r$ where the $N_r$ are nilpotent ordinary Lie algebras.

Now, suppose that $f:L\to N$ is a homomorphism of ordinary Lie algebras with $N$ nilpotent. Let $u=f(a)$, $v=f(b)$. Then $v=f(b)=f([a,b])=[u,v]$, so $v\in \Gamma^1N$. The identity $v=[u,v]$ implies $v\in \Gamma^rN$ for all $r$. Since $N$ is nilpotent, $\Gamma^rN=0$ for a sufficiently big $r$, so $v=0$, i.e. $f(b)=0$.

The argument above implies $\pi_r(\theta(b))=0$ for all $r$, where $\pi_r:\varprojlim_r N_r\to N_r$ is the canonical projection. So $\theta(b)=0$. This is impossible, since $b\not=0$ and $\theta$ is injective.
\end{remark}

\begin{remark} In \cref{prop-LC-partial-mirror}, on the coalgebraic side we have a weak finite type hypothesis while on the dgl side we have ``homology nilpotent and finite-type''. One could then wonder whether this condition is equivalent to ``finite-type minimal model'', i.e. a weak finite type condition for dgls. However, this is not true, as the following simple example shows:

Let $L$ be the %
abelian dgl $\Q^2$ concentrated in degree zero. So $H_*L=\Q^2$ is finite-type and nilpotent (it is an abelian graded Lie algebra). We now prove no cofibrant (a fortiori, minimal) model of $L$ is finite-type, %
i.e. there are no weak equivalences $N\to L$ with $N$ cofibrant of finite-type. Indeed, if $N$ is cofibrant, then its underlying graded Lie algebra is free, so $N_0$ is an ordinary free Lie algebra. If we had a weak equivalence $N\to L$, then we would have an isomorphism $H_0(N)\cong H_0(L)=\Q^2$. On the other hand, we have a projection $N_0\to H_0(N)\cong \Q^2$. %
If $N_0$ were free on zero or one generator, then it would have dimension zero or one, giving  a contradiction with this surjection. Therefore, $N_0$ is a free Lie algebra on two generators or more. In particular, it is infinite-dimensional, so $N$ is not of finite type.
\end{remark}

\section{Models for spaces}

We now combine the results we have obtained and connect them with spaces in a structured way, improving \cite[Prop. 7.3]{Neisendorfer}, which gave only equivalences of homotopy categories.

In \cref{sect-Lie} we made the homotopical link between  $\DGL$ and $\CDGC_1$, in \cref{sect-duality} between $\CDGC_1$ and $\CDGA^1$ and in \cite{Pepe-Bruno}, together with Moreno--Fernández, between $\CDGA^1$ and the category of pointed simplicial sets $\sSet_*$ with the standard Kan--Quillen model structure. We now recall the latter link, providing additional details.

The full subcategory $\CDGA^1$ is reflective in $\CDGA_{/\Q}$. Explicitly, the reflector $L:\CDGA_{/\Q}\to \CDGA^1$ is given by taking the quotient by the dg ideal generated by the kernel of the augmentation in degree 0. 

\begin{lemma}\label{lem-connected-augmented} The adjunction $L:\CDGA_{/\Q}\rightleftarrows \CDGA^1:i$ is a partial Quillen equivalence on the cohomologically connected augmented cdgas weakly of finite type and the connected cdgas weakly of finite type.
\end{lemma}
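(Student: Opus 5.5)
We verify the three conditions of \cref{def-partial} for the Quillen adjunction $L\dashv i$ from \cite[Rmk. 6.8]{Pepe-Bruno}. Write $\C_0\subset \CDGA_{/\Q}$ for the cohomologically connected augmented cdgas weakly of finite type and $\D_0\subset\CDGA^1$ for the connected cdgas weakly of finite type.

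For Condition (1), we use that $\C_0$ is closed under weak equivalences. Cohomological connectivity and the isomorphism class of the minimal model are quasi-isomorphism invariants, so any cofibrant replacement of $X\in\C_0$ again lies in $\C_0$. On the $\CDGA^1$ side every object is fibrant, so we may take $PY=Y$.

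For Condition (2), the right adjoint is easy: a connected cdga is cohomologically connected, and the augmented and connected notions of weak finite type agree by \cref{lem-wft}. Hence $iY\in\C_0$ for every $Y\in\D_0$. For the left adjoint, let $X\in\C_0$ be cofibrant and take its finite-type minimal model $m:M\xrightarrow{\sim} X$. Since $M$ is connected, $LM\cong M$. Because $L$ is left Quillen and $m$ is a weak equivalence between cofibrant objects, $Lm: M\to LX$ is a weak equivalence. Thus $LX$ is connected with a finite-type minimal model, so $LX\in\D_0$.

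For Condition (3), consider first the derived unit. For cofibrant $X\in\C_0$ it is the unit $\eta_X: X\to iLX$, since fibrant replacement is unnecessary. This is the most delicate step, which I handle through naturality against the minimal model. We have a commutative square with top map $m:M\to X$, left map $\eta_M$, right map $\eta_X$ and bottom map $iLm$. Here $\eta_M$ is an isomorphism because $M$ is already connected, and $iLm$ is a weak equivalence by the previous paragraph. By 2-out-of-3, $\eta_X$ is a weak equivalence.

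Now consider the derived counit for $Y\in\D_0$, namely $L\widetilde{Y}\to LiY\cong Y$, where $q:\widetilde Y\to iY$ is a cofibrant replacement in $\CDGA_{/\Q}$. By \cref{lem-cof-cdga} we may instead take $\widetilde Y$ to be the connected minimal model $M\to Y$, which is cofibrant in all three categories. Then $LM\cong M$, and the derived counit identifies with $m$ itself, a weak equivalence. If a general cofibrant replacement is used instead, it is related to $M$ by a weak equivalence between cofibrant objects, which $L$ preserves, so the conclusion is unchanged.

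The main subtlety is justifying that $L$ preserves weak equivalences between cofibrant objects in this way. This holds by Ken Brown's lemma, which is what makes reducing everything to minimal models legitimate.
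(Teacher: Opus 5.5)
Your proposal is correct and follows essentially the paper's argument. The unit is handled by comparing with the finite-type minimal model $M\to X$ via naturality and 2-out-of-3, and the derived counit by using that the counit of $L\dashv i$ is an isomorphism together with $L$ preserving weak equivalences between cofibrant objects. The only cosmetic difference is in the counit step: the paper lifts $m$ against the acyclic fibration $q$ to get $u$ with $qu=m$. That map $u$ is exactly the comparison you leave implicit when you say a general cofibrant replacement is related to $M$ by a weak equivalence between cofibrant objects.
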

\begin{proof}
    Since the inclusion $i$ preserves (acyclic) fibrations, this is a Quillen adjunction. In \cite[Rmk. 6.9]{Pepe-Bruno} (setting $G=\{e\}$) it is observed without proof that this is a partial Quillen equivalence: we now fill in the details.

Both full subcategories are closed under weak equivalences, so Condition (1) of \cref{def-partial}  is satisfied. Condition (2) for $i$ is automatic. 

Let us check Condition (3) for $i$. Note the counit of $L\dashv i$ is a natural isomorphism since $i$ is fully faithful. Let $Y\in \CDGA^1$ be weakly of finite type and $q:QY\to Y$ be a cofibrant replacement. We need to check $Lq:LQY\to LY$ is a weak equivalence. Let $m:M\xrightarrow{\sim} Y$ be a minimal model of $Y$, of finite type by hypothesis. We lift this map against the acyclic fibration $q$ to obtain an $u:M\to QY$ such that $qu=m$. Since $L$ preserves weak equivalences between cofibrant objects, $Lu$ is a weak equivalence; since $Y$ and $M$ are connected, we can identify $Lm$ with $m$ via the counit isomorphism. 
By 2-out-of-3 for weak equivalences, since $Lq \circ Lu = m$, we conclude $Lq$ is a weak equivalence.

We now turn to Conditions (2) and (3) for $L$. We will prove that if $X\in \CDGA_{/\Q}$ is cofibrant and cohomologically connected, %
then the unit $\eta_X:X\to LX$ is a weak equivalence. From this, Conditions (2) and (3) for $L$ are immediate. %

Let $m:M\xrightarrow{\sim} X$ be the minimal model of $X$, so $Lm:M\to LX$ is a weak equivalence. By naturality of the adjunction, $Lm$ is the composition $M\xrightarrow{m} X\xrightarrow{\eta_X} LX$. %
Since $m$ and $Lm$ are weak equivalences, then $\eta_X$ is so, too, by 2-out-of-3.
\end{proof}

\begin{theorem} \label{thm-main} There is a zig-zag of adjunctions
\[\begin{tikzcd}
	\DGL & {\CDGC_1} & {(\CDGA^1)^\op} & {\CDGA_{/\Q}^\op} & {\sSet_*}
	\arrow[""{name=0, anchor=center, inner sep=0}, "\C"', shift right=2, from=1-1, to=1-2]
	\arrow[""{name=1, anchor=center, inner sep=0}, "\L"', shift right=2, from=1-2, to=1-1]
	\arrow[""{name=2, anchor=center, inner sep=0}, "{(-)^*}", shift left=2, from=1-2, to=1-3]
	\arrow[""{name=3, anchor=center, inner sep=0}, "{(-)^\circ}", shift left=2, from=1-3, to=1-2]
	\arrow[""{name=4, anchor=center, inner sep=0}, "{i^\op}", shift left=2, from=1-3, to=1-4]
	\arrow[""{name=5, anchor=center, inner sep=0}, "{L^\op}", shift left=2, from=1-4, to=1-3]
	\arrow[""{name=6, anchor=center, inner sep=0}, "{\langle - \rangle}"', shift right=2, from=1-4, to=1-5]
	\arrow[""{name=7, anchor=center, inner sep=0}, "\APL"', shift right=2, from=1-5, to=1-4]
	\arrow["\dashv"{anchor=center, rotate=-90}, draw=none, from=2, to=3]
	\arrow["\dashv"{anchor=center, rotate=-90}, draw=none, from=1, to=0]
	\arrow["\dashv"{anchor=center, rotate=-90}, draw=none, from=4, to=5]
	\arrow["\dashv"{anchor=center, rotate=-90}, draw=none, from=7, to=6]
\end{tikzcd}\]
where $\L\dashv \C$ is a partial crossed derivable equivalence and the other three adjunctions are partial Quillen equivalences. The objects of the full subcategories involved are, from left to right:
\begin{itemize}
    \item The dgls $L$ such that $H_*(L)$ is nilpotent and finite type,
    \item The connected cdgcs weakly of finite type,
    \item The connected cdgas weakly of finite type,
    \item The cohomologically connected augmented cdgas weakly of finite type,
    \item The connected nilpotent rational pointed simplicial sets of finite type.
\end{itemize}
In particular, the $\infty$-categories determined by these objects are all equivalent.

\end{theorem}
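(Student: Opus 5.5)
The plan is to assemble the four links already established or citable, and then pass to $\infty$-categories via \cref{prop-infty}. From left to right, the link between $\DGL$ and $\CDGC_1$ is \cref{prop-LC-partial-mirror}. It gives the partial crossed derivable equivalence on exactly the first two listed subcategories. The link between $\CDGC_1$ and $(\CDGA^1)^\op$ is \cref{thm-dual-partial}, which gives a partial Quillen equivalence on the weak finite type connected cdgcs and cdgas. For the third link I will take the opposite of the partial Quillen equivalence of \cref{lem-connected-augmented}. Here I need a short observation: if $F\dashv G$ is a partial Quillen equivalence on $\C_0,\D_0$, then $G^\op\dashv F^\op$ is a partial Quillen equivalence on $\D_0^\op,\C_0^\op$ for the opposite model structures. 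This holds because \cref{def-partial} is self-dual: cofibrant and fibrant replacements swap, and the derived unit and counit swap. Applying this to $L\dashv i$ gives $i^\op\dashv L^\op$ as drawn.

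For the last link, $\APL\dashv\langle-\rangle$ between $\sSet_*$ and $\CDGA_{/\Q}^\op$, I will cite Bousfield--Gugenheim, in the form recorded in \cite{Pepe-Bruno}. It is a Quillen adjunction, and it is a partial Quillen equivalence on the connected nilpotent rational pointed simplicial sets of finite type and the cohomologically connected augmented cdgas weakly of finite type. Concretely, the conditions to check are these. Condition (1) holds since both classes are closed under weak equivalence and every simplicial set is cofibrant; for the rational nilpotent finite-type class one uses a fibrant replacement such as $\mathrm{Ex}^\infty$, which preserves the class. Condition (2) is that $\APL X$ is weakly of finite type for such $X$, and that $\langle A\rangle$ of a cofibrant $A$ in the class is connected, nilpotent, rational and of finite type; this is Bousfield--Gugenheim's analysis via minimal models, combined with \cref{lem-wft}. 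Condition (3) is their theorem that the derived unit $X\to\langle \APL X\rangle$ is a weak equivalence for rational nilpotent finite-type $X$, and that $A\to \APL\langle A\rangle$ is a quasi-isomorphism for minimal $A$ of finite type, extended to cofibrant $A$ by comparing with the minimal model and using 2-out-of-3.

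Finally, I will apply \cref{prop-infty} to each of the four adjunctions, obtaining equivalences $\C_0^\infty\simeq\D_0^\infty$ for each consecutive pair. For the opposite categories I need $(\C^\op)^\infty\simeq(\C^\infty)^\op$, which follows from the universal property \eqref{eq-univ-loc}. Composing gives the asserted chain of equivalences, contravariant on the cdga terms.

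I expect the main obstacle to be the Bousfield--Gugenheim step, and specifically making sure that the subcategories match \cref{def-partial} exactly. Condition (2) demands that $\langle A\rangle$ land in the finite-type class for cofibrant but non-minimal $A$, and that the chosen fibrant replacements stay in $\sSet_*$ restricted to the rational nilpotent finite-type objects. I would first try to reduce everything to minimal models via \cref{lem-wft} and Ken Brown's lemma. This works because $\langle-\rangle$, being right Quillen on the opposite side, preserves weak equivalences between cofibrant cdgas.
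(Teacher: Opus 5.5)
Your proposal is correct and follows the paper's proof. It assembles \cref{prop-LC-partial-mirror}, \cref{thm-dual-partial}, the opposite of \cref{lem-connected-augmented}, and Bousfield--Gugenheim's theorem as reinterpreted in \cite{Pepe-Bruno}, then applies \cref{prop-infty}. The paper only cites the last link, and your sketch is consistent with it up to two harmless slips: on the $\sSet_*$ side, Condition (1) of \cref{def-partial} asks only for cofibrant replacements, so $\mathrm{Ex}^\infty$ plays no role; and the derived unit is $X\to\langle\widetilde{\APL X}\rangle$ for a cofibrant replacement $\widetilde{\APL X}\xrightarrow{\sim}\APL X$, not $X\to\langle\APL X\rangle$.
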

\begin{proof}
The statements regarding the first two adjunctions from the left are \cref{prop-LC-partial-mirror} and  \cref{thm-dual-partial}. The next partial Quillen equivalence is the opposite to the one in \cref{lem-connected-augmented}. The last one is \cite[Thm. 10.1]{Bousfield--Gugenheim}, reinterpreted as a partial Quillen equivalence \cite[Thm. 5.4, Rmk. 5.16]{Pepe-Bruno}. Here $\APL$ is Sullivan's functor of piecewise linear differential forms and $\langle -\rangle$ is the realization functor.

The $\infty$-categorical statement follows from \cref{prop-infty}.
\end{proof}

\bibliographystyle{alpha}
\bibliography{MyBib}
\end{document}